\newtheorem{theorem}{Theorem}[section]
\newtheorem{lemma}[theorem]{Lemma}
\newtheorem{corollary}[theorem]{Corollary}
\newtheorem{prop}{Proposition}[section]
\theoremstyle{definition}
\newtheorem{definition}{Definition}[section]
\theoremstyle{remark}
\newtheorem{remark}[theorem]{Remark}
\numberwithin{equation}{section}
\begin{document}

\title[Certain Simple Approximately Subhomogeneous algebras]{{\large The
classification of separable simple C*-algebras which are inductive limits of
continuous-trace C*-algebras with spectrum homeomorphic to the closed
interval [0,1]}}
\author{George A. Elliott and Cristian Ivanescu}
\address{Department of Mathematics, University of Toronto, Toronto, ON, Canada   M5S 2E4}
\email{elliott@math.toronto.edu}
\address{Department of Mathematics, University of Toronto, Toronto, ON, Canada   M5S 2E4}
\email{cristian@math.toronto.edu}

\date{September 2005}
\maketitle

\vspace*{15mm}
\noindent

\noindent


\begin{abstract}
{\large A classification is given of certain separable nuclear C*-algebras
not necessarily of real rank zero, namely, the class of separable 
simple C*-algebras which are inductive limits of continuous-trace C*-algebras 
whose building blocks have spectrum homeomorphic to the closed interval $[0,1]$, 
or to a disjoint union of copies of this space. 
Also, the range of the invariant is calculated. }
\end{abstract}

\vspace*{30mm}
\noindent
{\it 1991 Mathematics Subject Classification.} 46L35, 46L06.\\
{\it Key words and phrases.} K-theory, classification, C*-algebras, inductive limits, real rank one.

\newpage

\section{\protect\large Introduction}

{\large \noindent \indent It is shown in \cite{lin} that an important class
of separable simple crossed product C*-algebras are approximately
subhomogeneous. Recall that a C*-algebra is said to be subhomogeneous if it
is isomorphic to a sub-C*-algebra of $M_{n}(C_{0}(X))$ for some natural
number $n$ and for some locally compact Hausdorff space $X$. An
approximately subhomogeneous C*-algebra, abbreviated ASH algebra, is an
inductive limit of subhomogeneous algebras. \newline
\indent This article contains a partial result in the direction of
classifying all simple ASH algebras by their Elliott invariant.\newline
\indent The first result on the classification of C*-algebras not of real
rank zero was the classification by G. Elliott of unital simple approximate
interval algebras, abbreviated AI algebras (see \cite{ell2}). This result
was extended to the non-unital case independently by I. Stevens (\cite{ste})
and K. Thomsen (\cite{kto}). Also, an interesting partial extension of this
result to the non-simple case was given by K. Stevens (\cite{kst}). It is
worth mentioning that all these algebras are what are referred to as
approximately homogeneous algebras, abbreviated AH algebras, and that the
most general classification result for simple AH algebras was obtained by
Elliott, Gong and Li in \cite{go2}.\newline
\indent One of the first isomorphism results for ASH algebras was the proof
by H. Su of the classification of C*-algebras of real rank zero which are
inductive limits of matrix algebras over non-Hausdorff graphs; see \cite{su}%
. The classification of ASH algebras was also considered in \cite{su0}, \cite%
{rak} and \cite{mig}. (This list of contributions is intended to be
representative rather than complete for the classification of ASH algebras.)%
\newline
\indent An important work on the classification of ASH algebras not of real
rank zero, and in fact one of the first ones, is due to I. Stevens (\cite%
{ste0}). The main result of the present paper is a substantial extension of
Stevens's work, to the class consisting of all simple C*-algebras which are
inductive limits of continuous-trace C*-algebras with spectrum homeomorphic
to the closed interval $[0,1]$ (or to a finite disjoint union of closed
intervals). In particular, the spectra of the building blocks considered
here are the same as for those considered by Stevens. The building blocks
themselves are more general.\newline
\indent The isomorphism theorem is proved by applying the Elliott
intertwining argument.\newline
\indent Inspired by I. Stevens's work, the proof proceeds by showing an
Existence Theorem and a Uniqueness Theorem for certain special continuous
trace C*-algebras. (As can be seen from the proofs, it is convenient to have
a special kind of continuous trace C*-algebra as the domain algebra in both
these theorems. By special we mean having finite dimensional irreducible
representations and such that the dimension of the representation, as a
function on the interval, is a finite (lower semicontinuous) step function.)%
\newline
\indent The present Existence Theorem, Theorem 5.1, differs in an important
way from that of \cite{ste0}, Theorem 29.4.1. In fact Theorem 29.4.1 of \cite%
{ste0} is false, as is shown in Section 5.1 below.\newline
\indent The proof of the present Existence Theorem is an eigenvalue pattern
perturbation, as shown in Section 5, which is similar to the approach used
in \cite{ste0}. (Indeed, once the statement of Theorem 29.4.1 of \cite{ste0}
is corrected, the argument given in \cite{ste0} does not need to be
essentially changed.)\newline
\indent The proof of the present Uniqueness Theorem is different from the
one in \cite{ste0}. It uses the finite presentation of special continuous
trace C*-algebras that was given in \cite{ici0} and \cite{ici}. Also the
present Uniqueness Theorem has the advantage that both the statement and the
proof are intrinsic, i.e., there is no need to say that the building blocks
are hereditary sub-C*-algebras of interval algebras as in \cite{ste0}.%
\newline
\indent In order to apply the Existence and Uniqueness Theorems, it is
necessary to approximate the general continuous trace C*-algebras appearing
in a given inductive limit decomposition by special continuous trace
C*-algebras, as described in \cite{ici}, Theorem 4.15. This is admissible
since in \cite{ici} (and also more generally in \cite{ici0}), it is shown
that these special C*-algebras are weakly semiprojective, i.e., have stable
relations. (A result of T. Loring, Lemma 15.2.2, \cite{lor}, allows one to
conclude that the original inductive limit decomposition can be replaced by
an inductive limit of special continuous trace C*-algebras.)\newline
\indent An important step of the proof of the isomorphism theorem is the
pulling back of the invariant from the inductive limit to the finite stages.
The invariant has roughly two major components: a stable part and a
non-stable part. The pulling back of the stable part is contained in \cite%
{ell2} or \cite{ste0} and is performed in the present situation with respect
to the unital hereditary sub-C*-algebras. The intertwining which is obtained
at the level of the stable invariant will approximately respect the
non-stable part of the invariant on finitely many elements, as pointed out
in \cite{ste0}. To be able to apply the Existence Theorem it is crucial to
ensure that the non-stable part of the invariant is exactly preserved on
finitely many elements (actually, just a single element). It is possible to
obtain an exact preservation of the non-stable invariant on finitely many
elements because one can change the given finite stage algebras in the
inductive limit decomposition in such a way that a non-zero gap arises at
the level of the affine function spaces; see Section 8 below. It is this
non-zero gap that will ultimately guarantee (after passing to subsequences
in a convenient way) the exact intertwining on finte sets of the non-stable
invariant, as shown in Section 9. It is worth mentioning that in the pulling
back of the stable invariant, we must ensure, at the same time that the maps
at the affine function space level are given by eigenvalue patterns. This is
necessary in order to apply the Existence Theorem and is possible by the
Thomsen-Li theorem.\newline
\indent Now all the hypotheses of the Elliott intertwining argument are
fulfilled and in this way the proof of the isomorphism Theorem 3.1 is
completed.\newline
\indent I. Stevens's description of the range of the invariant is also
extended to include the case of unbounded traces (Theorem 3.2).\newline
\indent To conclude, the class of simple inductive limits of
continuous-trace C*-algebras under consideration is compared with the class
of simple AI algebras. 
}

\section{\protect\large The invariant}

{\large 
\noindent \indent The invariant is similar to the invariant I. Stevens has
used in \cite{ste0}, usually summed up as the Elliott invariant, namely, $%
(K_0(A), \mathrm{AffT}^{+}A,$ $\mathrm{Aff}^{\prime}A)$, where $K_0(A)$ is a
partially ordered abelian group, AffT$^+A$ is a partially ordered vector
space consisting of linear and continuous functions defined on the cone of
traces $T^+A$, Aff$^{\prime}A$ is a certain special subset of $\mathrm{AffT}%
^{+}A$. The special subset $\mathrm{Aff}^{\prime}A$ is the most important
part of the invariant for our purposes, and in an informal way it might be
said to be the non-stable part of the AffT$^{+}A$. Formally, the special
subset Aff$^{\prime}A$ is the convex set obtained as the closure of $\{ 
\hat{a} \in \mathrm{Aff}T^+A|\;a\geq 0, a\in \mathrm{Ped}(A)\;\mathrm{and}%
\;||a||\leq 1\}$ inside Aff$T^+A$, with respect to the topology naturally
associated to a full projection. Here $\hat{a}$ is the linear and continuous
 function defined by the positive element $a$ from the Pedersen ideal by
$\hat{a}(\tau)=\tau(a)$ where $\tau \in T^+A$.
  As shown in \cite{ste0}, Remark 30.1.1
and Remark 30.1.2, the information given by Aff$^{\prime}A$ is equivalent
with that given by the trace-norm map, which is a lower semicontinuous
function $\mu :T^+A \rightarrow \mathbb{R},\; \mu(\tau)= ||\tau||$ and $%
\infty$ if $\tau$ is unbounded.\newline
\indent It is a crucial fact that the trace-norm map is equivalent to the
dimension function in the case of a building block algebra, cf. Section 4
below. The dimension function of a building block (i.e. the function that
assigns to each point in the spectrum of the building block the dimension of
the irreducible representation) can be viewed as a lower semicontinuous
function on the extreme traces normalized on minimal projections in
primitive quotiens and hence we can compare it with functions from Aff$T^+A$%
. Then the subset Aff$^{\prime}A$ is the closure of the set of all affine
functions smaller than the dimension function. Conversely by taking the
supremum over all elements of Aff$^{\prime}A$ we recover the dimension
function in the case of the building blocks.\newline
}

\section{\protect\large The results}

{\large \noindent \indent Using the invariant described above it is possible
to prove a complete isomorphism theorem, namely, }

\begin{theorem}
{\large Let $\mathcal{A}$ and $\mathcal{B}$ be two non-unital simple C*-algebras which are
inductive limits of continuous-trace C*-algebras with spectrum homeomorphic
to $[0,1]$. Assume that \newline
\indent 1. there is a order preserving isomorphism $\psi _0 : K_0(\mathcal{A}%
) \rightarrow K_0(\mathcal{B}),$\newline
\indent 2. there is an isomorphism $\psi _T : \mathrm{Aff}T^+{} \rightarrow 
\mathrm{Aff}T^+\mathcal{B}$, such that 
\begin{equation*}
\psi_T(\mathrm{Aff}^{\prime}\mathcal{A})\subseteq \mathrm{Aff}^{\prime}{},
\end{equation*}
and \newline
\indent 3. the two isomorphisms are compatible: 
\begin{equation*}
\widehat{ \psi_0([p])} = \psi_T( \widehat{[p]}),\; [p] \in K_0(\mathcal{A}).
\end{equation*}%
\newline
Then there is an isomorphism of the algebras $\mathcal{A}$ and $\mathcal{B}$ that induces the
given isomorphism at the level of the invariant. }
\end{theorem}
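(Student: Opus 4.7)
The plan is to invoke the Elliott intertwining argument: write $\mathcal{A} = \varinjlim A_n$ and $\mathcal{B} = \varinjlim B_n$ as inductive limits of continuous-trace C*-algebras with spectrum $[0,1]$, and construct morphisms $\phi_n : A_n \to B_{m_n}$ and $\eta_n : B_{m_n} \to A_{n+1}$ which approximately intertwine the two inductive systems on prescribed finite sets and tolerances, and whose limit implements the given $(\psi_0, \psi_T)$. Before doing so, I would use the weak semiprojectivity (stable relations) of the \emph{special} continuous-trace C*-algebras established in \cite{ici0} and \cite{ici}, combined with Loring's Lemma~15.2.2 from \cite{lor}, to replace each $A_n$ and $B_n$ by a special continuous-trace C*-algebra (one whose dimension function is a finite lower semicontinuous step function). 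This reduction is essential because these are the building blocks for which the Existence Theorem (Theorem~5.1) and the Uniqueness Theorem of the present paper are formulated.

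At each stage the key task is to produce a C*-algebra morphism realizing a prescribed action on the invariant, which I would do by pulling the invariant isomorphism back from the limit to a finite stage and then applying the Existence Theorem. The stable part $(K_0, \mathrm{Aff}T^+)$ is pulled back following the technique of \cite{ell2} and \cite{ste0}, with non-unitality handled via unital hereditary sub-C*-algebras. Simultaneously, I would invoke the Thomsen--Li theorem to arrange that the pulled-back map on $\mathrm{Aff}T^+$ arises from an eigenvalue pattern, which is a prerequisite for the Existence Theorem. The ambiguity in the produced morphism is then controlled, up to an approximate unitary equivalence on the relevant finite set, by the Uniqueness Theorem, whose proof here relies on the finite presentation of special continuous-trace C*-algebras from \cite{ici0} and \cite{ici} --- giving an intrinsic statement that avoids embedding into interval algebras as in \cite{ste0}.

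The main obstacle, and the novelty relative to \cite{ste0}, is the exact (as opposed to merely approximate) preservation of the non-stable part $\mathrm{Aff}'$ of the invariant on a prescribed element. The pulled-back stable data only approximately respects $\mathrm{Aff}'$, but the Existence Theorem requires an exact match. To resolve this I would, as in Section~8, modify the finite-stage algebras $A_n$ and $B_n$ so that a strictly positive \emph{gap} opens up at the level of the affine function spaces, and then pass to an appropriate subsequence (Section~9) so that the gap converts the approximate preservation into exact preservation on the designated element. Once both conditions --- maps on $\mathrm{Aff}T^+$ given by eigenvalue patterns, and exact preservation of $\mathrm{Aff}'$ on the chosen element --- are simultaneously met, the Existence and Uniqueness Theorems supply compatible $\phi_n$ and $\eta_n$, and the Elliott intertwining assembles them into an isomorphism $\mathcal{A} \cong \mathcal{B}$ inducing the prescribed $(\psi_0, \psi_T)$.
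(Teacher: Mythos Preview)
Your proposal is correct and follows essentially the same route as the paper: reduction to special building blocks via weak semiprojectivity and Loring's lemma (Section~7), pulling back the stable invariant through unital hereditary subalgebras with the Thomsen--Li theorem to obtain eigenvalue patterns (Section~9, Step~1), the gap construction of Section~8 feeding into the exact non-stable intertwining of Section~9, Step~2, and then the Existence and Uniqueness Theorems assembled via Elliott's approximate intertwining (Section~10). There is no substantive deviation from the paper's argument.
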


{\large 
\noindent \indent A description is given of the range of the invariant. More
precisely, the following theorem is proved: }

\begin{theorem}
{\large Suppose that $G$ is a simple countable dimension group and $V$ is
the cone associated to a metrizable Choquet simplex. Let $\lambda:V
\rightarrow Hom^+(G,R)$ be a continuous affine map which takes extreme rays
into extreme rays. Let $f:V \rightarrow [0,+ \infty]$ be an affine lower
semicontinuous map, zero at zero and only at zero. Then $(G, V, \lambda,f)$
is the invariant of some simple non-unital inductive limit of
continuous-trace C*-algebras whose spectrum is the closed interval $[0,1]$. }
\end{theorem}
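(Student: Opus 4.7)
The plan is to realize $(G,V,\lambda,f)$ as the invariant of a carefully chosen inductive limit of special continuous-trace C*-algebras with spectrum $[0,1]$, by combining Thomsen's range-of-invariant theorem for simple non-unital AI algebras (which handles the triple $(G,V,\lambda)$) with the observation from Section 2 that on a building block the non-stable invariant $\mathrm{Aff}'$ is equivalent to the step-function dimension of the spectrum, and extending I. Stevens's range description in \cite{ste0} from the bounded-trace setting to the unbounded one.

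First, since $f$ is affine and lower semicontinuous on the metrizable Choquet-simplex cone $V$ and vanishes only at $0$, write $f=\sup_n f_n$ where $\{f_n\}$ is a pointwise increasing sequence of continuous bounded affine functions $V\to[0,+\infty)$, vanishing at $0$. Simultaneously, write the dimension group $G$ as a direct limit of finitely generated dimension groups $G_n$, with $V=\varprojlim V_n$ for suitable finite-dimensional approximating trace cones and $\lambda=\varprojlim\lambda_n$, arranging the connecting maps of the tower to have ``large multiplicity'' eigenvalue patterns so as to force simplicity of the eventual limit. Next, build an inductive system $A_1\to A_2\to\cdots$ of special continuous-trace C*-algebras over $[0,1]$ whose step-function dimension functions $d_n\colon[0,1]\to\mathbb{N}$ are chosen so that the stable invariant of $A_n$ realises $(G_n,V_n,\lambda_n)$, and so that the trace-norm function on $T^+A_n$, which at the extreme trace corresponding to $t\in[0,1]$ equals $d_n(t)$, approximates the pullback of $f_n$ under the identification of the extreme rays of $V_n$ with points of the spectrum.

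The connecting maps $A_n\to A_{n+1}$ are built to implement the prescribed eigenvalue patterns on $K_0$ and $\mathrm{Aff}\,T^+$, while at the same time transporting the chosen dimension step-functions compatibly; their existence rests on the finite presentability (weak semiprojectivity) of special continuous-trace C*-algebras established in \cite{ici0,ici}, which allows one to realise any prescribed map on the invariant between finite-stage building blocks, up to eigenvalue-pattern perturbation as in the existence arguments of Section 5. Finally, verifying that $\mathcal{A}:=\lim A_n$ has the correct invariant reduces to standard computations for AH-type inductive limits: the stable part $(G,V,\lambda)$ is determined by the chosen tower, and the trace-norm function on $T^+\mathcal{A}$ is recovered as $\sup_n f_n = f$, using that $\mathrm{Aff}'\mathcal{A}$ is the closure of the directed union of the images of the $\mathrm{Aff}'A_n$. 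The main obstacle will be arranging the step functions $d_n$ and the connecting maps simultaneously so that the prescribed $f$ is produced \emph{exactly} in the limit, including forcing $d_n(t)\to\infty$ at points $t$ whose associated extreme trace has $f=+\infty$, while preserving both simplicity and the exact stable invariant; this is the new ingredient beyond \cite{ste0} and is what the extension to unbounded traces requires.
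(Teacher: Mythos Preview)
Your plan is a plausible direct-construction strategy, but it is quite different from---and considerably harder than---the route the paper actually takes, and the step you yourself flag as the ``main obstacle'' is precisely the part that is not carried out.

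The paper does \emph{not} build the inductive system from scratch. Instead it reduces everything to the already-known bounded/continuous case of Stevens. First it invokes the range theorem for simple AI algebras (\cite{ste}) to produce a simple \emph{stable} AI algebra $\mathcal{A}$ with stable invariant $(G,V,\lambda)$. Then, restricting $f$ to a base $S$ of $V$, it writes $f$ not as a supremum but as a \emph{sum}: choose an increasing sequence of strictly positive continuous affine $f_n\nearrow f$ and set $g_1=f_1$, $g_n=f_n-f_{n-1}$, so $f=\sum_n g_n$. Each $g_n$ is continuous and bounded, so Stevens's Proposition~30.1.7 in \cite{ste0} already gives a hereditary sub-C*-algebra $\mathcal{B}_n\subset\mathcal{A}$ with trace-norm map $g_n$. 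Since $\mathcal{A}$ is stable, $\bigoplus_n\mathcal{B}_n$ sits diagonally inside $\mathcal{A}\otimes\mathbb{K}\cong\mathcal{A}$, and the hereditary subalgebra $\mathcal{H}$ it generates has trace-norm map $\sum_n g_n=f$ (the short argument being that an approximate unit for $\bigoplus\mathcal{B}_n$ is still an approximate unit for $\mathcal{H}$). Being a hereditary subalgebra of a simple AI algebra, $\mathcal{H}$ is of the desired form.

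What this buys is exactly the avoidance of your obstacle. You propose to simultaneously control the $K_0$-tower, the eigenvalue patterns producing the trace simplex, \emph{and} the step-function dimension data so that $d_n\to\infty$ along the right extreme rays while the stable invariant stays fixed and simplicity is preserved; you note this is the new ingredient but do not indicate how to do it. The paper sidesteps all of that bookkeeping: the stable data $(G,V,\lambda)$ is handled once and for all by the ambient AI algebra, and the possibly unbounded $f$ is assembled additively from bounded pieces already covered by Stevens, with the infinite sum accommodated by stability. Your approach could in principle be made to work, but as written it is an outline whose hardest step remains open, whereas the hereditary-subalgebra-in-a-stable-AI-algebra trick gives a short, complete argument.
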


{\large 
}

\section{\protect\large Special continuous trace C*-algebras with spectrum
the interval [0,1]}

{\large \noindent \indent  In this section we will introduce some
terminology. A very important piece of data that we shall consider is a map
that assigns, to each class of irreducible representations, the dimension of
a representation from that class. Roughly speaking, the dimension function
can be thought of as the non-stable part of the invariant when restricted to
the building blocks. }

\begin{definition}
{\large Let $A$ be a C*-algebra and let $\hat{A}$ denote the spectrum of $A$%
. Then the $dimension$ $function$ is the map from $\hat{A}$ to $\mathbb{R}
\cup +\infty $, 
\begin{equation*}
\pi \mapsto \mathrm{dim}(H_{\pi}), 
\end{equation*}
where by dim$(H_{\pi})$ we mean the dimension of the irreducible
representation $\pi$. }
\end{definition}

{\large \noindent \indent It was shown in \cite{ici}, Theorem 4.13, that the
dimension function is a complete invariant for continuous trace C*-algebras
with spectrum the closed interval $[0,1]$. Also concrete examples were
constructed for each given dimension function, cf. Section 7 of \cite{ici}.%
\newline
\indent Therefore given a lower semicontinuous integer valued (i.e., a
``dimension function'') which is finite-valued and bounded we can exhibit a
continuous trace C*-algebra 
\begin{equation*}
\left( 
\begin{array}{cccccc}
C_{0}(A_n) & C_{0}(A_n) & C_{0}(A_n) & \dots & C_{0}(A_n) &  \\ 
C_{0}(A_n) & C_{0}(A_{n-1}) & C_{0}(A_{n-1}) & \dots & C_{0}(A_{n-1}) &  \\ 
C_{0}(A_n) & C_{0}(A_{n-1}) & C_{0}(A_{n-2}) & \dots & C_{0}(A_{n-2}) &  \\ 
\vdots & \vdots & \vdots & \ddots &  &  \\ 
C_{0}(A_n) & C_{0}(A_{n-1}) & C_{0}(A_{n-2}) & \dots & C[0,1] & 
\end{array}
\newline
\right) \subseteq M_n \otimes C[0,1]. 
\end{equation*}
whose dimension function is the given function. Here $A_n \subseteq A_{n-1}
\subseteq \dots \subseteq [0,1]$ and each $A_i$ is an open subset of $[0,1]$%
. Moreover any trace on such an algebra is of the form $tr\otimes \nu$,
where $tr$ is the usual trace normalized on minimal matrix projections and $%
\nu$ is a finite measure on $[0,1]$. The extreme traces are parameterized by 
$t \in [0,1]$, and are given as $(tr \otimes \delta_{t})_{t\in [0,1]}$,
where $\delta_{t}$ is the normalized point mass at $t$. Then the trace norm
map is equal to the dimension function when restricted to the extreme
traces. To see that the trace norm map is equivalent to the special subset $%
\mathrm{Aff}^{\prime}()$ of the affine function space $\mathrm{Aff}T^+()$ we
repeat the proof of I. Stevens from \cite{ste0}, Remark 30.1.1 and Remark
30.1.2.\newline
\indent Inspired by a construction of I. Stevens in \cite{ste0} we make }

\begin{definition}
{\large A continuous-trace C*-algebra whose spectrum is $[0,1]$ will be
called a special continuous-trace C*-algebra if its dimension function is a
finite-valued $finite$ $step$ function: there is a partition of $[0,1]$ into
a finite union of intervals such that the dimension function is finite and
constant on each such subinterval. }
\end{definition}

\begin{remark}
{\large Let $A$ be a continuous trace C*-algebra with spectrum $[0,1]$ and
with dimension function $d:[0,1] \rightarrow \mathbb{N} \cup \{ + \infty
\}$. There exists a projection-valued function that if composed with the
rank function gives rise to the dimension function $d$. To see this first we
notice that because the Dixmier-Douady invariant of $A$ is trivial, the
C*-algebra $A$ is a continuous field of elementary C*-algebras over $[0,1]$,
where the fibers are hereditary sub-C*-algebras of the algebra of compact
operators. Then take the unit of the hereditary sub-C*-algebra in each
fiber. In this way we construct a projection-valued function which is lower
semicontinuous. By composing this constructed projection-valued function
with the rank function we get the dimension function $d$. }
\end{remark}

{\large 
}

\begin{remark}
{\large A priori our definition for a special sub-C*-algebra is more general
than I. Stevens's definition. As it is shown in \cite{ici}, any special
sub-C*-algebra in our sense is isomorphic to a special sub-C*-algebra in I.
Stevens's sense. }
\end{remark}

\begin{remark}
{\large It was shown in \cite{ici} that special continuous trace C*-algebras
are finite presented and weakly semiprojective. Also a stronger result was
proven in \cite{eir}, namely that special continuous trace C*-algebras are
strongly semiprojective. }
\end{remark}

{\large 
}

\section{\protect\large Balanced inequalities and the Existence Theorem}

{\large \noindent \indent The proof of the isomorphism theorem 3.1 is based
on the Elliott intertwining argument. Among the main ingredients of this
procedure are the Existence Theorem that will be described below as well as
the Uniqueness Theorem that is presented in Section 6.\newline }
{\large \noindent \indent It is worth noticing that for the Existence
Theorem and the Uniqueness Theorem we require that the inequalities are
balanced, i.e., independent of the choice we make for the normalization of
the affine function space. We normalize the affine function spaces with
respect to a full projection. Even though we fix a projection in the domain
algebra for both the Existence Theorem and the Uniqueness Theorem, this
choice does not make any difference when we apply the theorems to obtain an
approximate commuting diagram. As was pointed out to us by Andrew Toms, we
only need to consider a compatible family of projections when we go through
the whole proof, provided that a corresponding projection is chosen in the
codomain algebra. In fact,  we can state the theorems without mentioning the
choices of the projections as long as their }$K_{0}${\large \ -classes \ are
compatible with respect to the }$K_{0}$ -{\large map under consideration
even though they exist and some choices of them will be used during the
proof.\newline
\indent To be able to focus on the new aspects of the present Existence
Theorem as opposed to the Existence Theorem for unital continuous trace
C*-algebras proved by Elliott in \cite{ell2}, we will both state the theorem
and prove it in terms of so-called eigenvalue pattern maps. In our situation
an eigenvalue pattern map is a positive unital map from $C([0,1])$ to $%
C([0,1])$ which is a finite sum of *-homomorphisms from $C([0,1])$ to $%
C([0,1])$. Using the Gelfand theory each such *-homomorphism is given by a
continuous function from $[0,1]$ to $[0,1]$. As follows from the
intertwining of the invariant and will be explained below, Section 9, one
can always obtain a (non-necessarily compatible) eigenvalue patterns maps.%
\newline
\indent The proof of the Existence Theorem is obtained by perturbing an
eigenvalue pattern map between the affine function spaces in a such a way
that it defines an algebra map between the building blocks.\newline
}

{\large 
}

\begin{theorem}
{\large Let $A$ be a special building block and by $d_A$ denote the
dimension function of $A$. Let a finite subset $F$ contained in $\mathrm{Aff}%
T^{+}A$, and $\epsilon >0$ be given. There is $f^{\prime}\in \mathrm{Aff}%
^{\prime}A$ such that for any special building block $B$ with dimension
function $d_B$, and maps $k: D(A) \rightarrow D(B)\;\mathrm{and}\;T:\mathrm{%
Aff}T^{+}A \rightarrow \mathrm{Aff}T^{+}B$ verifying the conditions\newline
1. $k$ has multiplicity $M_k$,\newline
2. $T$ is given by an eigenvalue pattern and has the property 
\begin{equation*}
T(f^{\prime}) \leq d_B,
\end{equation*}
3. $k$ and $T$ are exactly compatible, i.e., 
\begin{equation*}
\widehat{k ( [r])}= T (\hat{[r]}),
\end{equation*}
there is a homomorphism $\psi : A \rightarrow B$ such that $k =\psi_0$ and 
\begin{equation*}
||(T-\psi_T)a||_{\widehat{k(p)}} \leq \epsilon ||a||_{\hat{p}}, \;\; a \in F.
\end{equation*}
}
\end{theorem}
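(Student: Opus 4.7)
The plan is to decode the eigenvalue-pattern map $T$ as a tuple of continuous functions $\lambda_1,\dots,\lambda_{M_k}\colon[0,1]\to[0,1]$ with $T(f)(t)=\sum_{i=1}^{M_k} f(\lambda_i(t))$, and then to define $\psi\colon A\to B$ fibrewise by $\psi(a)(t)=\bigoplus_i\pi_{\mu_i(t)}(a)$, embedded into the fibre of $B$ at $t$, after replacing $(\lambda_i)$ by a small perturbation $(\mu_i)$ satisfying the pointwise fitting constraint $\sum_i d_A(\mu_i(t))\le d_B(t)$. The theorem then reduces to choosing $f'$ in advance so that any $T$ with $T(f')\le d_B$ admits such a perturbation with controlled displacement, and to checking that the resulting $\psi$ has $\psi_0=k$ and satisfies the affine estimate.

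The choice of $f'\in\mathrm{Aff}'A$ is made first, depending only on $F$ and $\epsilon$. Since $d_A$ is a lower semicontinuous finite-valued step function, it is discontinuous at only finitely many points $s_1,\dots,s_r\in[0,1]$, taking at each $s_j$ the smaller of the two one-sided limits. Fix a small union of neighbourhoods $U=\bigcup_j U_j$ of the $s_j$ and let $f'$ equal $d_A$ off $U$, equal $d_A(s_j)$ at $s_j$, and interpolate affinely on $U_j\setminus\{s_j\}$; then $f'$ is continuous, $f'\le d_A$, and $f'\in\mathrm{Aff}'A$. The diameter of $U$ is chosen, using uniform continuity of the elements of $F$ viewed as continuous functions on the extreme traces, small enough that the final affine estimate will be met after an eventual perturbation of each $\lambda_i$ by at most the diameter of $U$.

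Given $B,k,T$ satisfying the hypotheses, decode $T$ as $(\lambda_i)$. The constraint $\sum_i d_A(\lambda_i(t))\le d_B(t)$ is automatic wherever no $\lambda_i(t)$ lies in $U$, because then $f'(\lambda_i(t))=d_A(\lambda_i(t))$ and the hypothesis $T(f')\le d_B$ gives the bound. On the set where some $\lambda_i(t)$ lies in a $U_j$, perturb each offending $\lambda_i$ continuously inside $U_j$ to $\mu_i$ so that $\mu_i$ lies at $s_j$ or on the low-dimension side of $s_j$ at the problematic times, reducing $\sum_i d_A(\mu_i(t))$ by precisely the excess $\sum_i (d_A-f')(\lambda_i(t))$ that $T(f')\le d_B$ makes available. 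The resulting $(\mu_i)$ is continuous and $\mu_i$ differs from $\lambda_i$ by at most the diameter of $U$. Define $\psi$ fibrewise as above and embed into $B$ using Remark~4.2; a multiplicity count gives $\psi_0=k$, and the displacement estimate gives the affine bound on $F$.

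The main obstacle is carrying out this perturbation globally and continuously. When $\lambda_i$ winds through a jump point $s_j$ as $t$ varies, the move off the high-dimension side of $s_j$ must be performed coherently across the offending eigenvalue functions, preserving continuity of each $\mu_i$ while keeping $\sum_i d_A(\mu_i(t))\le d_B(t)$ everywhere. This combinatorial bookkeeping is the essential content of the eigenvalue-pattern perturbation technique; it succeeds because the spectrum is one-dimensional and $d_A$ is finite-valued, but, as flagged by Section~5.1, a naive perturbation fails, and the present theorem gains by starting from the choice of $f'$ rather than, as in \cite{ste0}, attempting to work with $d_A$ directly.
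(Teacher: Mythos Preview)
Your proposal follows essentially the same strategy as the paper: choose $f'\in\mathrm{Aff}'A$ as a continuous under-approximation to $d_A$ that agrees with $d_A$ outside small neighbourhoods of the jump points and equals $d_A(s_j)$ at each $s_j$, then perturb each eigenvalue function $\lambda_i$ within those neighbourhoods toward the low-dimension side so that $d_A\circ\mu_i\le f'\circ\lambda_i$ pointwise, whence $\sum_i d_A\circ\mu_i\le T(f')\le d_B$ and the perturbed pattern defines the desired $\psi$. The paper carries out the perturbation explicitly only in the model case of a single discontinuity with identity eigenfunctions and then asserts the extension to piecewise-linear eigenfunctions; your sketch is marginally more general in treating several jumps at once, but the idea, the choice of $f'$, and the residual bookkeeping (continuous patching of the $\mu_i$ and the fibrewise construction of $\psi$) are the same.
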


\begin{remark}
{\large Recall that Aff$T^{+}A$ is a Banach space with a norm given by $%
||f||_p= \mathrm{sup}\{|f(\tau)|\;| \;\tau(p)=1,\;\tau \in T^{+}A\}$, where $f \in 
\mathrm{Aff}T^{+}A$ and $p$ is a fixed full projection of $A$. In addition,
using the norm we just defined, Aff$T^{+}A$ is identified with $C([0,1])$.
This identification allows us to compare in the supremum norm the dimension
function and elements of Aff$T^{+}A$. Also the norm of Aff$T^{+}B$ is
defined with respect to a projection from $B$ which is Murray-von Neumann
equivalent to $k(p)$. Since our inequalities at the level of the affine
function spaces are balanced, which is the only theorem that makes sense, in
particular they are independent of the choice of the projection $p$. }
\end{remark}

\begin{proof}
{\large \indent The idea of the proof is to choose in a clever way a
function $f^{\prime}$ and then change within the given tolerance the
eigenvalue functions that appear in the eigenvalue pattern $T$ so that the
image of the dimension function $d_A$ under the new eigenvalue pattern is
smaller than or equal the dimension function of the algebra $B$, as desired.%
\newline
\indent Let $\epsilon >0$ and a finite set $F \subset \mathrm{AffT}^+A$ be
given.\newline
\indent As already mentioned it is a crucial step how $f^{\prime}$ is
chosen. There is no loss in generality if we assume that the dimension
function $d_A$ has only one discontinuity point, $t_0 \in [0,1]$. }


\begin{center}
\begin{picture}(400,200)(-50,0)
\put(145,57){]}
\put(145,107){(}
\put(50,60){\line(1,0){95}}
\put(145,110){\line(1,0){95}}
\put(50,10){\vector(1,0){190}}
\put(50,10){\vector(0,1){185}}
\end{picture}

Figure 1.  Dimension function $d_A$.
\end{center}

{\large Choose $f^{\prime}$ to be a continuous function such that $%
f^{\prime}(t)=d_A(t)$ for $t \in [0,t_0-\delta] \cup [t_0+\delta,1],
f^{\prime}(t) \leq d_A(t)$ for $t \in [0,1]$, and $f^{\prime}(t_0)=d_A(t_0)$%
, where $\delta \leq \frac{\epsilon}{2M_{k}^2}$. 
\indent Hence $f^{\prime}$ is a continuous function defined on the interval $%
[0,1]$ which approximate $d_A$, namely $f^{\prime}$ is equal to $d_A$ except
on a small neighbourhood around the discontinuity point. 
}

\begin{center}
\begin{picture}(400,200)(-50,0)
\put(145,57){]}
\put(145,107){(}
\put(50,60){\line(1,0){95}}
\put(145,110){\line(1,0){95}}
\put(50,57){\line(1,0){95}}
\put(159,107){\line(1,0){81}}
\put(145,57){\line(1,3){17}}
\put(50,10){\vector(1,0){190}}
\put(50,10){\vector(0,1){185}}
\end{picture}

Figure 2.  Graph of $f^{\prime}$.
\end{center}

{\large \indent Next we proceed by showing how to change the eigenfunctions
such that a desired eigenvalue pattern is obtained. We will carry out this
procedure in a very special case, namely all the eigenfunctions are assumed
to be the identity function.\newline
}

{\large 
}
\begin{center}
\begin{picture}(400,200)(-50,0)
\put(50,60){\line(1,0){190}}
\put(50,110){\line(1,0){190}}
\put(50,10){\line(1,1){180}}
\put(50,13){\line(1,1){180}}
\put(50,10){\vector(1,0){190}}
\put(50,10){\vector(0,1){185}}
\end{picture}

Figure 3.  Eigenfunction $\lambda$.
\end{center}


{\large \indent In the above picture we have the original eigenvalue
function $\lambda$ which is the identity map. We define a new eigenvalue
function as the picture shows below, Figure 4. More precisely the new
eigenvalue function $\hat{\lambda}:[0,1] \rightarrow [0,1]$, $\hat{\lambda}%
(t)=t$ for $t \in [0,t_0-\delta) \cup (t_0+ \delta+ \delta_{t_0}, 1]$, $\hat{%
\lambda}(t)=t_0-\delta$ for $t\in [t_0-\delta,t_0+\delta]$, the linear map $%
\hat{\lambda}(t)= t_0-\delta + (t-t_0-\delta)\frac{2\delta+\delta_{t_0}}{%
\delta_{t_0}}$ for $t \in [t_0+\delta,t_0+\delta+\delta_{\rho}]$, where $%
\delta_{t_0}$ is a strictly positive number such that $t_0 + \delta +
\delta_{t_0}\leq 1$. }

\begin{center}
\begin{picture}(400,200)(-50,0)
\put(50,60){\line(1,0){190}}
\put(50,110){\line(1,0){190}}
\put(50,10){\line(1,1){47}}
\put(97,57){\line(1,0){40}}
\put(137,57){\line(1,3){20}}
\put(156,115){\line(1,1){75}}
\put(50,13){\line(1,1){180}}
\put(50,10){\vector(1,0){190}}
\put(50,10){\vector(0,1){185}}
\end{picture}

Figure 4.  Eigenfunction $\hat{\lambda}$.
\end{center}


{\large \indent A short computation or a geometric argument shows that the
difference $||\lambda-\hat{\lambda}||_{\infty}=2\delta$.\newline
\indent Moreover the dimension function $d_A$ evaluated on the perturbed
eigenvalue $\hat{\lambda}$ is smaller then $f^{\prime}$ evaluated on the
given eigenvalue $\lambda$ 
\begin{equation*}
d_A(\hat{\lambda}(t)) \leq f^{\prime}(\lambda(t)).
\end{equation*}
\indent Hence by hypothesis 2 we have 
\begin{equation*}
\sum_{i=1}^{M_k}d_A \circ \hat{\lambda} \leq \sum_{i=1}^{M_k}
f^{\prime}\circ \lambda \leq d_B.
\end{equation*}
}

{\large 
}

{\large \indent Here we say that one dimension function is smaller than
another one if the relation holds pointwise.\newline
\indent The change of the eigenvalues is small because of the choice of $
\delta $ 
\begin{equation*}
||(T_{\lambda }-T)(a)||_{\widehat{k(p)}}=
\sum_{i=1}^{M_{k}}||a \circ (\hat{\lambda_{i}}-\lambda _{i})||_{\widehat{k(p)}}
\end{equation*}
\begin{equation*}
=\sum_{i=1}^{M_{k}} \mathrm{sup}\{|a \circ (\hat{\lambda_{i}}-\lambda _{i})(\tau)| 
 \; | \; \tau(k(p))=1, \tau \in T^+A \}
\end{equation*}
\begin{equation*}
=\sum_{i=1}^{M_{k}} \mathrm{sup}\bigg\{M_k \bigg|a \circ (\hat{\lambda_{i}}-\lambda _{i})
 \bigg(\frac{1}{M_k}\tau\bigg)\bigg| \; | \; \tau(p)=M_k, \tau \in T^+A \bigg\}
\end{equation*}
\begin{equation*}
=\sum_{i=1}^{M_{k}}M_k||a \circ (\hat{\lambda_{i}}-\lambda _{i})||_{\widehat{p}}
\leq 2\delta M_{k}^2||a||_{\hat{p}}\leq \epsilon ||a||_{\hat{p}},\;a\in F.
\end{equation*}
}
{\large
\indent To obtain the inequality above we used the linearity of the function 
$a \circ (\hat{\lambda_{i}}-\lambda _{i})$ and that an extreme trace
$\tau$ in $T^+ A$ has the property that $\tau (k(p))=1$ if and only if 
$\tau(p) = M_k$. }

{\large \indent We claim that the argument for the special case shown above
can be extended to the case of piecewise linear eigenfunctions which is
known to be equivalent to the general case of continuous eigenfunctions that
arise in the inductive limits of interval algebras (see for instance \cite%
{ell2}). }

{\large 
}
\end{proof}

\subsection{\protect\large An exact inequality is necessary between the
non-stable part of the invariant}

{\large \indent As mentioned in the introduction, the Theorem 29.4.1 of \cite%
{ste0} is false. To prove the Existence Theorem it is fundamental to have an
exact inequality between the non-stable part of the invariant at the level
of the affine function space, i.e., $T(f) \leq d_B$ for some continuous
affine function $f \leq d_A$. A weaker inequality is required in the
statement of the Existence Theorem of \cite{ste0}, Theorem 29.4.1, i.e., $%
T(f) \leq d_B(1+\delta)$ for some small $\delta >0$. Therefore it is
possible to construct a counterexample to the I. Stevens Existence Theorem.
This counterexample is already assuming that the positive linear map $T$ is
given by an eigenvalue pattern. To reduce the proof of Theorem 29.4.1 of %
\cite{ste0} to an eigenvalue pattern problem, one needs an extra assumption
in hypothesis 2, for instance a positive gap $\eta >0$ in the other side of
the inequality described above $T(f) + \eta \leq d_B(1+ \delta)$.\newline
\indent Next we describe the counterexample. Let $d_{\mathcal{A}}$ be the
lower semicontinuous function defined on $[0,1]$ which is equal to $2$ on
the subintervals $[0,1/2)$ and $(1/2,1]$, and equal to $1$ at $1/2$. Let $%
\epsilon_0$ be such that $0< \epsilon_0 <1/4$ and $F=\{a_1(t)=t\}$. Let $f$
be a continuous function which approximates $d_{\mathcal{A}}$. Since they
can not be equal everywhere around $1/2$, we can assume that $f(t) < 2 = d_{%
\mathcal{A}}(t)$ for all $t$ in $(1/2-\eta, 1/2+\eta)$, where $\eta >0$ can
be chosen as small as needed.\newline
\indent Let $\delta>0$ be given. There exists a positive integer $M_k$ such
that $\frac{1}{2M_k-1}< \delta$. Then choose $T$ to be defined by $M_k$
eigenvalue functions $(\lambda_i)_{i=1,\dots,M_k}$, all being the identity
functions, $\lambda_i(t)=t$, for all $i=1,\dots,M_k$. Next choose $B$ to be
a continuous trace C*-algebra with dimension function constant equal to $%
2M_k-1$.\newline
\indent Note that the hypothesis 2 of the Existence Theorem 29.4.1 from \cite%
{ste0} holds 
\begin{equation*}
T(f)(t)= \sum_{i=1}^{M_k}f\circ \lambda_i (t) \leq 2M_k \leq (1+\delta)d_{%
\mathcal{B}(t)}.
\end{equation*}
\ Now we claim that among all perturbations of $T$ which are within the
given $\epsilon_0$ with respect to the finite set $F$, the particular one $P$
which is given by the continuous eigenfunctions $(\mu_i)_{i=1,\dots,M_k}$
that have the property $\mu_i(t)=1/2$ for $t \in (1/2-\eta, 1/2+\eta)$, is
the smallest in the sense that the value of $P(d_{\mathcal{A}})$ is the
smallest. Here it is important to notice that because $\epsilon_0 < 1/4$ it
forces that $(\mu_i)_i(t)= \lambda(t)=t$ for $t$ close to $0$ and $1$
including $0$ and $1$. In particular we have $(\mu_i)(0)=\lambda_i(0)=0$.
Therefore 
\begin{equation*}
P(d_{\mathcal{A}})(0)=\sum_{i=1}^{M_k}d_{\mathcal{A}}(%
\mu_i(0)=2M_k>2M_k-1=d_{\mathcal{B}}(0).
\end{equation*}
\indent Therefore we cannot perturb the eigenfunctions to obtain a compatible
eigenvalue pattern and the Existence Theorem as stated in \cite{ste0} cannot
be proved. }

\section{\protect\large Uniqueness Theorem}

{\large \indent It is important to notice that the conclusion of the Existence
Theorem is part of the hypothesis of the Uniqueness Theorem; this makes
sense since all inequalities are balanced (i.e. independent of the choice of
projection with respect to which the normalization is done).}

\begin{theorem}
{\large Let A be a special continuous-trace C*-algebra, $F\subset A$ a
finite subset and $\epsilon >0$. Let $B$ be a special continuous-trace
C*-algebra and $\psi ,\varphi :A\rightarrow B$ be maps with the following
properties:\newline
\indent1. $\varphi _{0}=\psi _{0}:K_{0}(A)\rightarrow K_{0}(B)$,\newline
\indent2. $\psi $ and $\varphi $ have at least the fraction $\delta $ of
their eigenvalues in each of the $d$ consecutive subintervals of length $%
\frac{1}{d}$ of $[0,1]$, for some $d>0$ such that for $\hat{r_{i}}$ the
functions equal to $0$ from $0$ to $\frac{i}{d}$, equal to $1$ on $[\frac{i+1%
}{d},1]$ and linear in between, for each $0\leq i\leq d$, $||(\varphi
_{T}-\psi _{T})(\hat{r_{i}})||_{K(p)}<\delta ||\hat{r_{i}}||_{p}$, with
respect to the norm of $\mathrm{Aff}T^{+}B$,\newline
\indent Then there is an approximately inner automorphism of $B$, $f$, such
that 
\begin{equation*}
||(\psi -f\varphi )(a)||<\epsilon ,\;\;\;a\in F
\end{equation*}%
}
\end{theorem}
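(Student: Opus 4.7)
The plan is to reduce the uniqueness problem to a finite matching of eigenvalue distributions on short subintervals, produce a local unitary equivalence in each, and paste these into a single approximately inner automorphism of $B$, with the finite presentation of special continuous-trace C*-algebras as the key tool.

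First I would invoke weak semiprojectivity of $A$ (Remark 4.4, citing \cite{ici,eir}) to replace $F$ by a finite set of generators $G\supset F$ and a tolerance $\eta>0$ with the property that any $*$-homomorphism from $A$ to $B$ agreeing to within $\eta$ on $G$ agrees to within $\epsilon$ on $F$. The advantage of an intrinsic statement is that $G$ can be chosen directly from the generators of the finite presentation of $A$, without passing to the hereditary-sub-C*-algebra-of-an-interval-algebra description used in \cite{ste0}. The problem thereby reduces to constructing $f$ with $\|(\psi-f\varphi)(g)\|<\eta$ for all $g\in G$.

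Next I would discretize using the partition $I_i=[i/d,(i+1)/d]$ of hypothesis~2, choosing $d$ so large that each $g\in G$ varies by less than $\eta/4$ on each $I_i$. Up to $\eta/4$ on $G$, each of $\psi$ and $\varphi$ is then a direct sum of evaluations at its eigenvalues inside $I_i$. Hypothesis~1 forces the total eigenvalue multiplicities to coincide, and the bound $\|(\varphi_T-\psi_T)(\hat{r_i})\|_{K(p)}<\delta\|\hat{r_i}\|_p$ controls the cumulative eigenvalue-counting functions of the two multisets, since the $\hat r_i$ are piecewise-linear ramps from $0$ to $1$ and so form a basis that detects subinterval counts (via the identification of $\mathrm{Aff}T^+$ with $C[0,1]$ of Remark 5.2). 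After the standard monotone reordering of eigenvalues within each $I_i$, paired eigenvalues lie at distance at most $1/d$; the at-least-$\delta$-fraction condition of hypothesis~2 provides the room to perform the pairing without crossings at the endpoints of the $I_i$.

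On each $I_i$ the pairing yields a unitary $u_i$ over that subinterval conjugating $\varphi|_{I_i}$ to within $\eta/2$ of $\psi|_{I_i}$ on $G$. The main obstacle is then patching the $u_i$ into a single continuous unitary section of $B$: at each point where the dimension function $d_B$ jumps, the fibers of $B$ change dimension and the unitaries on either side live in matrix algebras of different sizes. This is where the finite presentation of $B$ from \cite{ici0, ici} is essential: near such a jump the generators of $B$ satisfy stable relations that prescribe exactly which matrix subblocks may be nontrivial on each side, and one interpolates $u_i$ and $u_{i+1}$ within those subblocks using the connectedness of $U(n)$, at the cost of a small additional error on $G$. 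Because the spectrum $[0,1]$ is contractible, the resulting global unitary $u$ lies in (a matrix algebra over) the unitisation of $B$ and is connected to the identity, so $f=\mathrm{Ad}\,u$ is approximately inner; the conclusion $\|(\psi-f\varphi)(a)\|<\epsilon$ for $a\in F$ then follows from the weak-semiprojectivity reduction of the first step.
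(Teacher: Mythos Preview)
Your outline conflates the two copies of $[0,1]$ in play. The subintervals $I_i=[i/d,(i+1)/d]$ of hypothesis~2 live in $\mathrm{Spec}(A)$: they are where the eigenvalue functions $\lambda_j:\mathrm{Spec}(B)\to\mathrm{Spec}(A)$ take their \emph{values}. So ``$\varphi|_{I_i}$'' and ``a unitary $u_i$ over that subinterval'' have no obvious meaning --- the unitary you need is a continuous section over $\mathrm{Spec}(B)$, and there is no reason the preimages $\lambda_j^{-1}(I_i)$ should partition $\mathrm{Spec}(B)$ in any useful way. Relatedly, your ``direct sum of evaluations'' picture treats the generators $g\in G$ as scalar-valued on $\mathrm{Spec}(A)$, but for a special continuous-trace algebra they are matrix-valued with rank jumping according to $d_A$; the reduction to a scalar eigenvalue-matching problem is exactly the step that needs an argument.

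The paper avoids this by a genuinely different manoeuvre: it does \emph{not} redo the eigenvalue-matching argument. Instead it restricts $\varphi,\psi$ to the full unital hereditary sub-C*-algebra $C[0,1]\subset A$ (the bottom-right corner in the concrete picture), observes that this carries the same stable invariant, and invokes Elliott's unital uniqueness theorem \cite{ell2} as a black box to obtain a partial isometry $V$ intertwining $\varphi$ and $\psi$ on the diagonal generators $f_{A_i}\otimes e_{nn}$. The finite presentation is then used on the \emph{domain} side, not the codomain: one manufactures
\[
\mathcal{V}=\sum_{i=1}^n \varphi(u_i\otimes e_{ni})^{*}\,V\,\psi(u_i\otimes e_{ni})
\]
from $V$ and the off-diagonal matrix units, and checks directly that $\mathcal{V}$ approximately intertwines all the generators $f_{A_i}\otimes e_{ni}$. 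The patching problem that remains is not gluing local unitaries across a partition of $\mathrm{Spec}(B)$, but extending the partial isometry $\mathcal{W}$ (from the polar decomposition of $\mathcal{V}$) between the hereditary subalgebras $\mathrm{her}(\varphi(A))$ and $\mathrm{her}(\psi(A))$ --- which share a dimension function --- to a global unitary; this is handled by the dedicated Lemma~6.2, applied at each singularity of that common dimension function.
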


{\large 
}

\begin{proof}
{\large Because of the isomorphism theorem 4.13 from \cite{ici}, there is no
loss of generality to assume that our building blocks are in a very special
form }

{\large 
\begin{equation*}
A\cong \left( 
\begin{array}{cccccc}
C_{0}(A_1) & C_{0}(A_1) & C_{0}(A_1) & \dots & C_{0}(A_1) &  \\ 
C_{0}(A_1) & C_{0}(A_2) & C_{0}(A_2) & \dots & C_{0}(A_2) &  \\ 
C_{0}(A_1) & C_{0}(A_2) & C_{0}(A_3) & \dots & C_{0}(A_3) &  \\ 
\vdots & \vdots & \vdots & \ddots &  &  \\ 
C_{0}(A_1) & C_{0}(A_2) & C_{0}(A_3) & \dots & C[0,1] & 
\end{array}
\newline
\right). 
\end{equation*}
}

{\large \indent Notice that the cancellation property holds for the unital
sub-C*-algebra of $A$ and any projection of $A$ is Murray-von Neumann
equivalent to a projection inside of the unital sub-C*-algebra. Therefore
the cancellation property holds for $A$. A similar argument shows that the
cancellation property holds for any continuous-trace C*-algebra with the
spectrum the closed interval $[0,1]$.\newline
\indent Since $\varphi_0 = \psi_0$, we can assume that $\varphi(p)= \psi(p)$%
, where $p$ is the unit of the sub-C*-algebra $C([0,1])$ of $A$. In other
words the restrictions of the maps to the unital subalgebra share the same
unit.\newline
\indent The stable part of the Elliott invariant (i.e., the $K_0$ group and
the affine function space Aff$T^+$) of $A$ and of $C([0,1])$ is the same.
Let us restrict the two maps $\varphi$ and $\psi$ to the unital
sub-C*-algebra $C([0,1])$. The image of $C([0,1])$ under $\varphi$ and $\psi$
is up to a unitary a full matrix algebra over the interval. Then using
assumptions 1 and 2 we notice that the hypotheses of the Elliott Uniqueness
Theorem (\cite{ell2}, Theorem 6), are fulfilled. Hence we get a partial
isometry $V$ of $B$ (a unitary inside of the full matrix sub-C*algebra of $B$%
) such that 
\begin{equation*}
||\varphi(f_{A_i} \otimes e_{nn})-V\psi(f_{A_i} \otimes e_{nn})V^{*}||\leq
\epsilon, \;\;i\in \{1, \dots , n\}.
\end{equation*}
\indent We want this relation to hold for the case when the domain is $A$.
We follow a strategy already present in the case of full matrix over the
interval. An important data that we will use is that the domain algebra $A$
has a finite presentation. In fact we will use the concrete description of
this presentation that was given in \cite{ici}, Section 8. The set of
generators consists of elements of the form $f_{A_i}\otimes e_{in}$ which
are certain positive functions tensor the matrix units. \newline
\indent For each $i$ let $u_i$ be a continuous function defined on $[0,1]$
which is equal to $1$ on $A_i$ except near the end points of each open
subinterval of $A_i$ and $0$ otherwise. One can think of $u_i$ as an
approximate unit of the functions $f_{A_i}$, $i \in \{1,\dots,n\}$ and later
estimates depend on the size of the subset of $A_i$ where $u_i$ is not equal
to $1$.\newline
\indent Define 
\begin{equation*}
\mathcal{V}= \sum_{i=1}^{n}\varphi(u_i\otimes e_{ni})^{*}V\psi(u_i\otimes
e_{ni}).
\end{equation*}
\indent Then 
\begin{equation*}
\mathcal{V}\psi(f_{A_i} \otimes e_{ni})\mathcal{V}^{*}=
\end{equation*}
\begin{equation*}
=(\sum_{k=1}^{n}\varphi(u_k\otimes e_{nk})^{*}V\psi(u_k\otimes
e_{nk}))\psi(f_{A_i} \otimes e_{ni})(\sum_{l=1}^{n}\psi(u_l\otimes
e_{ln})^{*}V^{*}\varphi(u_l\otimes e_{nl}))=
\end{equation*}
\begin{equation*}
=\varphi(u_n\otimes e_{nn})V\psi(f_{A_i}\otimes
e_{ni})(\sum_{l=1}^{n}\psi(u_l\otimes e_{ln})^{*}V^{*}\varphi(u_l\otimes
e_{nl}))=
\end{equation*}
\begin{equation*}
=\varphi(u_n\otimes e_{nn})V\psi(f_{A_i}\otimes
e_{ni})(\sum_{l=1}^{n}\psi(u_l\otimes e_{nl})V^{*}\varphi(u_l\otimes
e_{nl}))=
\end{equation*}
\begin{equation*}
=\varphi(u_n\otimes e_{nn})V\psi(f_{A_i}\otimes e_{ni})\psi(u_i\otimes
e_{in})V^{*}\varphi(u_i \otimes e_{ni})=
\end{equation*}
\begin{equation*}
=\varphi(u_n\otimes e_{nn})V\psi(f_{A_i}\otimes e_{nn})V^{*}\varphi(u_i
\otimes e_{ni}).
\end{equation*}
\indent Now we have that 
\begin{equation*}
\varphi (f_{A_i} \otimes e_{ni})= \varphi(u_n\otimes
e_{nn})\varphi(f_{A_i}\otimes e_{nn}) \varphi(u_i \otimes e_{ni}).
\end{equation*}
\indent Therefore 
\begin{equation*}
||\varphi (f_{A_i} \otimes e_{ni})-\mathcal{V}\psi(f_{A_i} \otimes e_{ni})%
\mathcal{V}^{*}||=
\end{equation*}
\begin{equation*}
||\varphi(u_n\otimes e_{nn})(\varphi(f_{A_i}\otimes
e_{nn})-V\psi(f_{A_i}\otimes e_{nn})V^{*})\varphi(u_i \otimes e_{ni})||\leq
\end{equation*}
\begin{equation*}
\leq ||\varphi(u_n\otimes e_{nn})||\epsilon ||\varphi(u_i \otimes e_{ni})||
\end{equation*}
\indent i.e. it can be made as small as needed.\newline
\indent We want to argue that $\mathcal{V}$ gives rise to a partial
isometry. Let us calculate 
\begin{equation*}
\mathcal{V}^{*}\mathcal{V}=
\end{equation*}
\begin{equation*}
=\sum_{l=1}^{n} \psi(u_l \otimes e_{nl})^{*}V^{*}\varphi(u_l\otimes
e_{nl})\sum_{i=1}^{n}\varphi(u_i\otimes e_{ni})^{*}V\psi(u_i\otimes e_{ni})=
\end{equation*}
\begin{equation*}
=\sum_{l=1}^{n} \psi(u_l \otimes e_{ln})V^{*}\varphi(u_l\otimes
e_{nl})\sum_{i=1}^{n}\varphi(u_i\otimes e_{in})V\psi(u_i\otimes e_{ni})=
\end{equation*}
\indent Assuming that each $u_i$ is equal to $1$ on the open intervals $A_i$
except small neighbourhood around the end points of $A_i$ we get 
\begin{equation*}
=\sum_{i=1}^{n} \psi(u_i \otimes e_{in})V^{*}\varphi(u_i\otimes
e_{nn})V\psi(u_i\otimes e_{ni})
\end{equation*}
which is very close to 
\begin{equation*}
\sum_{i=1}^{n} \psi(u_i \otimes e_{in})\psi(u_l\otimes
e_{nn})\psi(u_i\otimes e_{ni})=
\end{equation*}
\begin{equation*}
=\sum_{i=1}^{n} \psi(u_i \otimes e_{ii})
\end{equation*}
which is the value of the projection-valued map of the hereditary
sub-C*-algebra generated by $\psi(A)$ inside $B$. In other words $\mathcal{V}%
^{*}\mathcal{V}$ is as close as we want to be a projection. It is important
to notice that this is true if we are not in a small neighbourhood of the
singularity points of the dimension function of the hereditary
sub-C*-algebra generated by $\psi(A)$ (i.e. whenever $u_i=1$). \newline
\indent Similarly $\mathcal{V}\mathcal{V}^{*}$ is almost equal to the $%
\sum_{i=1}^{n} \varphi(u_i \otimes e_{ii})$ if we are not in a small
neighbourhood of the singularity points of the dimension function of the
hereditary sub-C*-algebra generated by $\varphi(A)$. Notice that any
singularity point $y_0$ of the dimension function of the hereditary
sub-C*-algebra generated by $\varphi(A)$ or $\psi(A)$ has the property that
there is an eigenfunction $\lambda_i$ such that $\lambda_i(y_0)$ is a
singularity point of the dimension function $d_A$ of $A$. In addition $%
\lambda_i$ is uniform continuous function from $[0,1]$ to $[0,1]$. Hence
small neighbourhoods of $y_0$ correspond to small neighbourhoods of some
singularity point of $d_A$.\newline
\indent From the polar decomposition $\mathcal{V}= \mathcal{W}|\mathcal{V}|$
we get a partial isometry $\mathcal{W}$. We claim that $\mathcal{W}$ still
intertwines approximately the two maps $\varphi$ and $\psi$, i.e., 
\begin{equation*}
||\varphi (f_{A_i} \otimes e_{ni})-\mathcal{W}\psi(f_{A_i} \otimes e_{ni})%
\mathcal{W}^{*}|| < 3 \epsilon,
\end{equation*}
\begin{equation*}
||\mathcal{W}^{*}\varphi (f_{A_i} \otimes e_{ni})\mathcal{W}-\psi(f_{A_i}
\otimes e_{ni})|| < 3 \epsilon.
\end{equation*}
}

{\large \indent This is true because 
\begin{equation*}
||\varphi (f_{A_{i}}\otimes e_{ni})-\mathcal{W}\psi (f_{A_{i}}\otimes e_{ni})%
\mathcal{W}^{\ast }||=
\end{equation*}%
\begin{equation*}
=||\varphi (f_{A_{i}}\otimes e_{ni})-\mathcal{V}\psi (f_{A_{i}}\otimes
e_{ni})\mathcal{V}^{\ast }+\mathcal{W}|\mathcal{V}|\psi (f_{A_{i}}\otimes
e_{ni})\mathcal{|V|}\mathcal{W}^{\ast }-\mathcal{W}\psi (f_{A_{i}}\otimes
e_{ni})\mathcal{W}^{\ast }||\leq 
\end{equation*}%
\begin{equation*}
\leq ||\varphi (f_{A_{i}}\otimes e_{ni})-\mathcal{V}\psi (f_{A_{i}}\otimes
e_{ni})\mathcal{V}^{\ast }||+||\mathcal{W}\mathcal{|V|}\psi
(f_{A_{i}}\otimes e_{ni})\mathcal{|V|}\mathcal{W}^{\ast }-\mathcal{W}\psi
(f_{A_{i}}\otimes e_{ni})\mathcal{W}^{\ast }||\leq 
\end{equation*}%
\begin{equation*}
\leq \epsilon +||\mathcal{|V|}\psi (f_{A_{i}}\otimes e_{ni})\mathcal{|V|}%
-\psi (f_{A_{i}}\otimes e_{ni})||\leq 
\end{equation*}%
\begin{equation*}
\leq \epsilon +||\mathcal{|V|}\psi (f_{A_{i}}\otimes e_{ni})\mathcal{|V|}-%
\mathcal{|V|}\psi (f_{A_{i}}\otimes e_{ni})+\mathcal{|V|}\psi
(f_{A_{i}}\otimes e_{ni})-\psi (f_{A_{i}}\otimes e_{ni})||\leq 
\end{equation*}%
\begin{equation*}
\leq \epsilon +\epsilon +\epsilon =3\epsilon .
\end{equation*}%
\indent and similarly we get the other desired inequality.\newline
\indent Hence we have constructed a family of partial isometries $\mathcal{W}
$ from the hereditary sub-C*-algebra generated by $\varphi (A)$ to the
hereditary sub-C*-algebra generated by $\psi (A)$. In addition $\mathcal{W}$
induces an isomorphism between the two above mentioned hereditary
sub-C*-algebras. In particular it implies that the two hereditary
sub-C*-algebras have the same dimension function. \newline
\indent Next we will show how to approximate $\mathcal{W}$ with a unitary in
the unitazation of the codomain algebra.\newline
\indent Let us start by applying Theorem 4.12 of \cite{ici} to the
projection-valued function corresponding to the hereditary sub-C*-algebra
generated by $\varphi (A)$. Hence we get a decomposition, possibly infinite,
in terms of functions each of which is projection-valued of rank $1$ on a
certain open subset of $[0,1]$ and zero otherwise. Notice that the
discontinuity points of the dimension function of the hereditary
sub-C*-algebra generated by $\varphi (A)$ correspond to the discontinuity
points of the functions appearing in the decomposition and the open sets are
increasing in a suitable sense. \newline
\indent Next we apply Lemma 6.2 for each point at singularity in the
interval $[0,1],$ or, in other words, to each function appearing on  the
decomposition. Thus, we have a family of unitaries that preserves the
continuity of the continuous elements of the hereditary sub-C*-algebra $%
\varphi (A)$ and at the same time has the property that it still intertwines
the two maps.\newline
}
\end{proof}

{\large \indent In the following lemma the hereditary sub-C*-algebras $H_1$ and $H_2$ are assumed 
to be
continuous bundles over $[0,1]$ (for more details about continuous bundles
of C*-algebras see \cite{was}).\newline
\indent If $A$ is a continuous bundle of C*-algebras over $[0,1]$ then $A^t$
stands for the fiber of $A$ over $t$. }

\begin{lemma}
{\large Let $H_1$ and $H_2$ be hereditary sub-C*-algebra of $M_2(C[0,1])$
with the same spectrum $[0,1]$ and identical dimension function equal to $1$
on the closed interval $[0,t_0]$ and equal to $2$ on the half-open interval $%
(t_0,1]$, $t_0 \in (0,1)$. Let $\mathcal{W}=(W(t))_{t\in [0,1]}$ be a family
of partial isometries indexed by the points of $[0,1]$. For each $t \in [0,1]
$, $W_t:M_2(\mathbb{C})\rightarrow M_2(\mathbb{C})$ such that $W(t)W(t)^*=$
the unit of $H_1^t$ and $W(t)^*W(t)=$ the unit of $H_2^t$. Then there exists
a family $\mathcal{W^{\perp}}$ of partial isometries indexed by $[0,1]$ such
that $\mathcal{W}+\mathcal{W^{\perp}}$ is a unitary inside of $M_2(C[0,1])$
and $(W+W^{\perp})_{t}(f)(t)=W_t(f)(t)$ for any continuous function $f \in
H_1$ and $t \in [0,1]$. }
\end{lemma}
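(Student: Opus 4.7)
The plan is to construct $\mathcal{W}^{\perp}$ piecewise, splitting $[0,1]$ at the singularity $t_{0}$. On the open interval $(t_{0}, 1]$ both $H_{1}^{t}$ and $H_{2}^{t}$ equal $M_{2}(\mathbb{C})$, so by hypothesis $W(t)$ is already a unitary, and we simply set $W^{\perp}(t) = 0$ there. On the closed interval $[0, t_{0}]$ the units $p(t) := W(t) W(t)^{*}$ and $q(t) := W(t)^{*} W(t)$ are rank-one projections, so we must produce a rank-one partial isometry $W^{\perp}(t)$ with initial projection $1 - q(t)$ and final projection $1 - p(t)$.

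The value at $t = t_{0}$ is forced by right-continuity of $\mathcal{W} + \mathcal{W}^{\perp}$: we are compelled to set
\[
W^{\perp}(t_{0}) \; := \; U - W(t_{0}), \qquad U \; := \; \lim_{t \to t_{0}^{+}} W(t),
\]
where $U$ is unitary in $M_{2}(\mathbb{C})$. A short calculation shows that $W^{\perp}(t_{0})$ is indeed a partial isometry with the required initial and final projections provided the compatibility $W(t_{0}) = U q(t_{0})$ holds; this compatibility is inherited from the way $\mathcal{W}$ is constructed in the proof of Theorem 6.1 (as the partial-isometry factor in the polar decomposition of the continuous element $\mathcal{V}$). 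On the half-open interval $[0, t_{0})$ the projections $1 - p$ and $1 - q$ are continuous rank-one projection-valued functions, and at each point the set of partial isometries with these prescribed rank-one initial and final projections is a circle; since the base is one-dimensional there is no topological obstruction, so by tuning the phase we obtain a continuous $W^{\perp}$ on $[0, t_{0}]$ matching the forced boundary value at $t_{0}$. This phase-matching across the singularity is where the main technical content of the lemma lies.

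Gluing the two pieces yields a continuous, pointwise-unitary function $\mathcal{W} + \mathcal{W}^{\perp} \colon [0,1] \to M_{2}(\mathbb{C})$, that is, a unitary in $M_{2}(C[0,1])$. The compatibility identity $(W + W^{\perp})_{t}(f)(t) = W_{t}(f)(t)$ for continuous $f \in H_{1}$ then follows pointwise: since $f(t) = p(t) f(t) p(t)$ and the orthogonality relations $p(t) W^{\perp}(t) = 0$ and $W^{\perp}(t)^{*} p(t) = 0$ hold by construction (the final projection of $W^{\perp}(t)$ being $1 - p(t)$), every cross-term in the expansion of the conjugation of $f(t)$ by $W(t) + W^{\perp}(t)$ vanishes, leaving only $W(t) f(t) W(t)^{*}$.
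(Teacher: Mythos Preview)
Your argument is correct and is essentially the mirror image of the paper's. Both split at $t_{0}$ and set $W^{\perp}(t)=0$ for $t>t_{0}$. The paper, however, first writes down an explicit complement on $[0,t_{0}]$ (so that $W_{t}+W_{t}^{\perp}$ is pointwise unitary there but discontinuous at $t_{0}$), and then repairs the jump by modifying the unitary on the \emph{right} of $t_{0}$: it extends the rank-one family $(W_{t})_{t\le t_{0}}$ to a continuous rank-one family $(W_{t}^{1})$ on all of $[0,1]$, observes that $W_{t_{0}}^{\perp}$ and $\lim_{t\to t_{0}^{+}}(W_{t}-W_{t}^{1})$ differ only by a unimodular scalar $c$, and redefines the unitary for $t>t_{0}$ as $U_{t}=W_{t}^{1}+c\,(W_{t}-W_{t}^{1})$. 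You instead anchor the construction at $t_{0}$ via the right-hand limit $U=\lim_{t\to t_{0}^{+}}W(t)$ and then extend the complement to the \emph{left} by a section-of-a-circle-bundle argument over the contractible base $[0,t_{0}]$.

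Both routes rely on the same unstated regularity of $\mathcal{W}$ (continuity on each of $[0,t_{0}]$ and $(t_{0},1]$, together with the compatibility $W(t_{0})=U\,q(t_{0})$), which---as you correctly note---is inherited from $\mathcal{W}$ arising as the partial-isometry part of the polar decomposition of the continuous element $\mathcal{V}$ in Theorem~6.1. Your organization has the minor advantage that $W+W^{\perp}=W$ exactly on $(t_{0},1]$, so the final compatibility identity $(W+W^{\perp})_{t}(f)(t)=W_{t}(f)(t)$ is immediate there; in the paper's version the unitary on $(t_{0},1]$ is genuinely changed, and one must still verify that conjugation by $U_{t}$ agrees with conjugation by $W_{t}$ on $H_{1}^{t}$.
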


\begin{proof}
{\large Diagrammatically the dimension function of $H_1$ and $H_2$ can be
pictured as follows. }

\begin{center}
\begin{picture}(400,200)(-50,0)
\put(145,57){]}
\put(145,107){(}
\put(50,60){\line(1,0){95}}
\put(145,110){\line(1,0){95}}
\put(50,10){\vector(1,0){190}}
\put(50,10){\vector(0,1){185}}
\end{picture}

Figure 5.  Dimension function of $H_1$ and $H_2$.
\end{center}


{\large \indent We construct the family $\mathcal{W}^{\perp}=(\mathcal{W}%
_{t}^{\perp})_{t\in [0,1]}$ as follows. Fix a $t$ in $[0,1]$, $t \leq t_0$. $%
W(t)$ is a partial isometry on some dimension-one subspace of $M_2(\mathbb{C}%
)$. Hence $W_t(M)=c(M)M_t$ where $c(M)$ is a constant depending on $M$ and $%
M_t$ is a projection matrix in $M_2(\mathbb{C})$. Let $W_t^{%
\perp}=c(M)(I_2-M_t)$. Notice that $W_t +W_t^{\perp}$ is a unitary operator
on $M_2(\mathbb{C})$. If $t >t_0$ then $W_t^{\perp}=0$.\newline
\indent The family of unitaries $(W_t+W_t^{\perp})_{t\in [0,1]}$ is
continuous except at the point $t_0$. Our work below shows that this family
can be modified to be continuous overall $[0,1]$.\newline
\indent Extend $(W_t)_{t\in [0,t_0]}$ to be a continuous family $%
(W_{t}^{1})_{t\in [0,1]}$ of partial isometries on dimension-one subspaces
of $M_2(\mathbb{C})$. $W_{t_0}^{\perp}$ and $\lim\limits_{t\rightarrow
t_0,t>t_0}(W_t-W_t^{1})$ are two partial isometries on the same dimension
one subspace of $M_2(\mathbb{C})$, hence they differ by a constant of
absolute value one, i.e. 
\begin{equation*}
W_{t_0}^{\perp}=c \lim\limits_{t\rightarrow t_0,t>t_0}(W_t-W_t^{1}).
\end{equation*}
\indent Define the continuous family of unitaries $(U_t)_{t\in [0,1]}$ to be 
$U_t=W_t +W_t^{\perp}$ if $t\leq t_0$ and $U_t=W_t^{1} + c(W_t - W_t^{1})$
if $t > t_0$. }

{\large 
}

{\large \indent Hence the continuous family of unitaries $W_t$ is given by $%
U_t$ and $(U_{t}(f)(t)=W_t(f)(t)$ for any continuous function $f \in H_1$
and $t \in [0,1]$.\newline
}

{\large 
}
\end{proof}

\section{\protect\large Inductive limits of special continuous trace
C*-algebras}

{\large \indent Next let us show that the Existence Theorem and the Uniqueness
Theorem presented above can be applied, i.e., that the hypotheses of the theorems
can be fulfilled. As a first step in this direction let us  show that an
inductive limit of continuous-trace C*-algebras with spectrum $[0,1]$ 
(or disjoint unions of closed intervals)  is
isomorphic to an inductive limit of special continuous-trace C*-algebras. 
\newline
\indent The basic tools in establishing this step are the fact that special
continous trace C*-algebras are semiprojective (cf. \cite{ici}, Theorem 6.5)
and a result by T. Loring (\cite{lor}, Lemma 15.2.2) which for the convenience 
of the reader we state below: \\ 
\indent Suppose that $A$ is a C*-algebra
containing a (not necessarily nested) sequence of sub-C*-algebras $A_n$ with
the property that for all $\epsilon > 0$ and for any finite number of elements $%
x_1, \dots,x_k$ of $A$, there exist an integer $n$ such that
\begin{equation*}
\{ x_1, \dots, x_k \} \subset_{\epsilon}A_n.
\end{equation*}
\indent If each $A_n$ is weakly semiprojective and finitely presented, then 
\begin{equation*}
A \cong \lim\limits_{\rightarrow}(A_{n_k}, \gamma_{k})
\end{equation*}
for some subsequence of $(A_n)$ and some maps $\gamma_k:A_{n_k}\rightarrow A_{n_{k+1}}$%
. }

\begin{prop}
{\large Let $A$ be a simple inductive limit of continuous-trace C*-algebras
whose building blocks have their spectrum homeomorphic to $[0,1]$. Then $A$
is an inductive limit of direct sums of special continuous-trace C*-algebras 
with spectrum $[0,1]$. }
\end{prop}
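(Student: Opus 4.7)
The plan is to combine the approximation theorem for continuous-trace C*-algebras with spectrum $[0,1]$ (\cite{ici}, Theorem 4.15) with T. Loring's Lemma 15.2.2, which was recalled just before the proposition. The first result produces, inside any continuous-trace C*-algebra with spectrum $[0,1]$, arbitrarily close special continuous-trace sub-C*-algebras; the second converts such an approximation property into a genuine inductive-limit decomposition, provided the approximating algebras are finitely presented and weakly semiprojective, which is exactly the content of Remark 4.5 above.

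Write $A = \lim_{\to}(B_{n},\phi_{n,n+1})$, where each $B_{n}$ is a finite direct sum of continuous-trace C*-algebras with spectrum homeomorphic to $[0,1]$. Fix a countable dense subset $\{x_{m}\}_{m\ge 1}$ of $A$ and a tolerance sequence $\epsilon_{m}\to 0$. For each $m$, choose an index $n=n(m)$ and an element $y_{m}\in B_{n(m)}$ such that $\|x_{m}-\phi_{n(m),\infty}(y_{m})\|<\epsilon_{m}$. Applying \cite{ici}, Theorem 4.15, summand by summand to $B_{n(m)}$, we obtain a direct sum $C_{m}\subseteq B_{n(m)}$ of special continuous-trace C*-algebras with spectrum $[0,1]$ and an element $y_{m}'\in C_{m}$ with $\|y_{m}-y_{m}'\|<\epsilon_{m}$. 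Setting $A_{m}:=\phi_{n(m),\infty}(C_{m})\subseteq A$, the elements $x_{1},\dots,x_{m}$ all lie within $2\epsilon_{m}$ of $A_{m}$ (after enlarging $A_{m}$ to include previously handled sub-C*-algebras if necessary, which we can do by taking $n(m)$ non-decreasing and forming an increasing chain of $C_{m}$'s inside $B_{n(m)}$ using the same Theorem 4.15). Hence every finite subset of $A$ is contained, to within any prescribed $\epsilon>0$, in some $A_{m}$.

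Each $A_{m}$ is a direct sum of special continuous-trace C*-algebras with spectrum $[0,1]$ and is therefore weakly semiprojective and finitely presented (Remark 4.5). Loring's Lemma 15.2.2 then produces a subsequence $(A_{m_{k}})_{k}$ and connecting *-homomorphisms $\gamma_{k}\colon A_{m_{k}}\to A_{m_{k+1}}$ such that
\begin{equation*}
A\;\cong\;\lim_{\to}\,(A_{m_{k}},\gamma_{k}),
\end{equation*}
which is the required decomposition.

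The main obstacle is the organization of the approximation in the first step: a priori the sub-C*-algebras $C_{m}$ produced by Theorem 4.15 sit inside different $B_{n(m)}$'s and need not satisfy any inclusion relation when pulled over to $A$. This is handled by processing the generators $x_{m}$ one at a time, always working inside a single sufficiently large $B_{n(m)}$ containing approximate preimages of all previously chosen $y_{j}'$, and choosing $C_{m}$ inside $B_{n(m)}$ to contain the images of $C_{1},\dots,C_{m-1}$ (again using the semiprojectivity of the $C_{j}$'s to lift the approximate inclusions into $B_{n(m)}$ to genuine *-homomorphisms). Once this near-nested approximation is arranged, the actual connecting maps $\gamma_{k}$ appearing in the inductive limit are not the identity or obvious inclusions, but perturbations of the approximate embeddings furnished by weak semiprojectivity; that Loring's lemma delivers them is precisely what makes the argument work.
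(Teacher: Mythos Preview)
Your approach is the same as the paper's: invoke the approximation of arbitrary building blocks by special ones (\cite{ici}), the finite presentation and weak semiprojectivity of special building blocks, and then Loring's Lemma 15.2.2. The paper's proof is exactly these three lines.

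Where you diverge is in the last paragraph (and the parenthetical in the second): you identify as the ``main obstacle'' that the sub-C*-algebras $A_m$ need to be near-nested, and you spend effort arranging containments $C_1\subseteq C_2\subseteq\cdots$ via semiprojective lifts. This is unnecessary. Loring's lemma, as quoted just before the proposition, explicitly allows a \emph{not necessarily nested} sequence of sub-C*-algebras; all that is required is that every finite subset of $A$ lie within $\epsilon$ of some $A_m$. So the obstacle you describe is not there.

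The clean way to verify Loring's hypothesis is simply this: at stage $m$, pick $n(m)$ large enough that \emph{all} of $x_1,\dots,x_m$ have $\epsilon_m$-preimages $y_1,\dots,y_m$ in $B_{n(m)}$, and then choose a single special sub-C*-algebra $C_m\subseteq B_{n(m)}$ approximating $y_1,\dots,y_m$ simultaneously. No chain, no lifts. Your construction as written only approximates $y_m$ at step $m$, which is why you felt forced into the nesting discussion.

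A minor point: in your fourth paragraph you say you can choose $C_m$ to \emph{contain} the images of $C_1,\dots,C_{m-1}$. The approximation theorem only gives special sub-C*-algebras that \emph{approximate} a prescribed finite set; it does not promise containment of an arbitrary finitely generated sub-C*-algebra. Since nesting is not needed anyway, this does not damage the proof, but the claim as stated is not justified.
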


\begin{proof}
{\large \indent In Proposition 5.4 and Theorem 6.5 of \cite{ici} it is
proved that the class of special continuous trace C*-algebras with spectrum $%
[0,1]$ are finitely presented and have weakly stable relations. Each
building block from the inductive limit decomposition of $A$ can be
approximated by special continuous trace C*-algebras (cf. Theorem 6.14 of \cite{ici}).
Then $A$ satisfies Loring's hypothesis where the sequence of semiprojective
algebras is given by the special algebras from the approximation of the
building blocks. Thus the Loring's lemma implies that $A$ is an inductive
limit of special continuous trace C*-algebras. }
\end{proof}

\section{\protect\large Getting a non-zero gap at the level of affine
function spaces}

{\large \indent To be able to exactly intertwine the non-stable part of the
invariant it is useful to know that the dimension function of any building
block $A_m$ or $B_m$ is taken by the homomorphism $\phi_{m,m+1}$
respectively $\psi_{m,m+1}$ into a function smaller than or equal to the
dimension function of $A_{m+1}$ or $B_{m+1}$ such that a non-zero gap
arises. In other words we want to exclude the possible cases when the
dimension function is taken into the next stage dimension function such that
equality holds at a point or at more points. We shall show this in the
following lemma. Recall that because of Proposition 7.1, the algebras that
we want to classify can be assumed to be inductive limits of special
continuous trace C*-algebras with spectrum $[0,1]$, i.e., $A \cong
\lim\limits_{\rightarrow}(A_{n},\phi_{nm})$ and $B \cong
\lim\limits_{\rightarrow}(B_{n},\psi_{nm})$, where $A_n,B_n$ are special
continuous trace C*-algebras. }

\begin{lemma}
{\large Let $A= \lim\limits_{\rightarrow}(A_n,\phi_{nm})$ be a simple
C*-algebra, where each $A_n$ is a special continuous trace C*-algebra with
spectrum the closed interval $[0,1]$ and the dimension function assumed to
be a finite-valued bounded function. Then there exists $\delta_1 >0$, a
subsequence $(A_{n_{i}})_{n_{i}\geq 0}$ of $(A_n)_n$ and a sequence of maps $%
\phi_i:A_{n_i}\rightarrow A_{n_{i+1}}$ such that \newline
\indent 1. $A \cong \lim\limits_{\rightarrow}(A_{n_i},\phi_{n_im_i})$,\newline
\indent 2. ${(\phi_{n_1n_2})}_T(\hat{P}_{A_{n_1}}) + \delta_1 < \hat{P}%
_{A_{n_2}}$,\newline
where the inequality holds pointwise, ${(\phi_{nm})}_T$ is the induced map at
the level of the affine function spaces, $P_{A_{n_1}}$ and $P_{A_{n_2}}$ are 
the units of the biduals of $A_{n_1}$ and $A_{n_2}$, and $\hat{P}_{A_{n_1}}$ 
and $\hat{P}_{A_{n_2}}$ denote the corresponding lower semicontinuous functions.}
\end{lemma}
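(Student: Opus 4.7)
The key observation is that both $\hat{P}_{A_n}=d_n$ and $(\phi_{nm})_T(\hat{P}_{A_n})=\sum_{j}d_n\circ\lambda_j^{nm}$, where the $\lambda_j^{nm}:[0,1]\to[0,1]$ are the eigenvalue functions of the connecting map $\phi_{nm}$, are integer-valued lower semicontinuous functions on $[0,1]$. Consequently, if I can arrange the pointwise strict inequality $(\phi_{nm})_T(\hat{P}_{A_n})(t)<\hat{P}_{A_m}(t)$ at \emph{every} $t$, then integer-valuedness automatically upgrades this to a uniform gap of at least $1$, and $\delta_1=\tfrac{1}{2}$ satisfies condition~2. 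The task thus reduces to producing, for each $n$, an $m(n)>n$ at which the strict pointwise inequality $\sum_j d_n(\lambda_j^{n,m(n)}(t))<d_{m(n)}(t)$ holds for every $t\in[0,1]$.

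To produce such an $m(n)$, I would combine the simplicity of $A$ with its non-unitality (inherited from the setting of Theorem~3.1). Non-unitality ensures that the increasing sequence $\phi_{n\infty}(1_{A_n}^{**})\in A^{**}$ does not exhaust $A$, so that for $m$ sufficiently large the image projection $\phi_{n,m}(1_{A_n}^{**})$ is properly dominated by $1_{A_m}^{**}$; simplicity forces the multiplicities of the $\phi_{nm}$ to grow and the eigenvalue sets $\{\lambda_j^{nm}(t)\}_j$ to spread densely in $[0,1]$ at every $t$ as $m\to\infty$. When these together give strict inequality at every $t$, we are done. If instead the original $\phi_{n,m(n)}$ attains equality at some isolated ``saturation point'' $t_0$, I would invoke the weak semiprojectivity of special continuous-trace C*-algebras (Proposition~7.1 and \cite{ici}) to replace $\phi_{nm}$ by an approximately unitarily equivalent map whose eigenvalue functions are perturbed slightly near $t_0$, pushing the sum strictly below $d_{m(n)}$. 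Since approximately unitarily equivalent connecting maps give the same inductive limit, the modified system still has limit $A$.

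I would then define the subsequence iteratively by $n_1=1$ and $n_{i+1}=m(n_i)$, taking $\phi_i$ to be the (possibly perturbed) composite $\phi_{n_i,n_{i+1}}$. By the Elliott intertwining argument, $\lim_{\rightarrow}(A_{n_i},\phi_i)\cong A$, giving condition~1; condition~2 with $\delta_1=\tfrac{1}{2}$ follows from the construction of $m(n_1)$.

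The main obstacle is justifying the strict pointwise inequality at \emph{every} $t$, as opposed to only on average or on a dense subset. The worry is a persistent saturation point $t_0$ at which the image of $1_{A_n}^{**}$ fills the fibre of $A_m$ for arbitrarily large $m$; such a point would effectively localize the limit around $t_0$, producing a proper ideal of $A$ and contradicting simplicity (or, from another angle, would force image projections to exhaust the algebra, contradicting non-unitality). Where a purely structural argument is delicate, the perturbation route via weak semiprojectivity is the safety net: it always allows breaking isolated saturation by arbitrarily small perturbations of eigenvalue functions, at no cost to the inductive limit.
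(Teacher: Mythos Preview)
Your integer-valuedness observation is exactly right and matches the paper's conclusion that a pointwise strict inequality becomes a uniform gap of at least $1$ (which, after normalization of the affine spaces, becomes some $\delta_1>0$). The difficulty is entirely in obtaining the strict inequality at \emph{every} $t$, and here your argument has a genuine gap.

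Your primary mechanism --- that non-unitality of $A$ forces $\phi_{nm}(1_{A_n}^{**})$ to be properly dominated by $1_{A_m}^{**}$ at every fibre for $m$ large --- does not hold. Non-unitality only tells you that for \emph{some} $m$ the map $\phi_{1m}$ is not unital, hence the image fails to fill the fibre at \emph{some} point $t$; it says nothing about every $t$, and the ``bad'' point can move with $m$. Your contrapositive (``a persistent saturation point produces a proper ideal'') is backwards: saturation at $t_0$ means the image fills the fibre there, which produces no ideal at all. Your fallback perturbation is stated only for \emph{isolated} saturation points and works by nudging eigenvalues, but if $d_n$ is constant near the relevant range this cannot decrease $\sum_j d_n(\lambda_j(t))$, and saturation need not be isolated --- it can hold on an interval or everywhere.

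The paper's mechanism is different and is what you are missing. One first perturbs the \emph{target} of the map: replace the hereditary subalgebra generated by $\phi_{12}(A_1)$ inside $A_2$ by a slightly smaller special subalgebra $B$ (shrink one of the open intervals on which the top value of the dimension function is attained), and use weak semiprojectivity of $A_1$ to redirect $\phi_{12}$ into $B$. This creates, by construction, a nonzero ideal $I_1\subset A_2$ orthogonal to the image. Now simplicity is used, not to rule out saturation, but to \emph{propagate} this local gap: the image of $I_1$ in $A$ is a nonzero ideal, hence all of $A$, so at some finite stage $A_k$ the image of $I_1$ is all of $A_k$; a strictly positive element of $I_1$ then witnesses a gap of at least $1$ at every $t\in[0,1]$ between $d_B$ and $d_{A_k}$. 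In short: perturb to create a gap \emph{somewhere}, then let simplicity spread it \emph{everywhere}. Your sketch has the two ingredients (perturbation, simplicity) but applies them in the wrong order and for the wrong purpose.
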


\begin{proof}
{\large Let $A$ be equal to $\lim\limits_{\rightarrow}A_n$ with maps $%
\phi_{n,m}:A_n \rightarrow A_m$.\newline
\indent The plan is to keep the same building blocks and to change slightly
the maps with respect to some given finite sets such that the desired
property holds. To do this we use the property that the building blocks that
appear in the inductive limit decomposition are weakly semiprojective.%
\newline
\indent Assume that the dimension function of $\phi_{12}(A_1)$ equals the
dimension function of $A_2$ at some point or even everywhere and let $%
\epsilon >0$, $F_1 \subset A_1$ be given. Because the largest value of the
dimension function of the hereditary sub-C*-algebra generated by $%
\phi_{12}(A_1)$ inside $A_2$ is attained on an open subset $U$ of $[0,1]$,
let us construct another dimension function as follows: shrink one of the
open intervals of the open set $U$ to get $U^{\prime}$ and in exchange
enlarge the interval adjacent to that discontinuity point. $U^{\prime}$ is
constructed in a such a way that is as close as necessary to the given $U$. }

{\large 
}

{\large 
}

{\large \indent In this manner we find a sub-C*-algebra $B$ which is as
close as we want to the hereditary sub-C*-algebra generated by $%
\phi_{12}(A_1)$ inside of $A_2$. Next we use that $A_1$ is weakly
semiprojective to find another *-homomorphism $\rho_1: A_1 \rightarrow B$
which is close within the given $\epsilon$ on the given finite set $F_1$. }

{\large 
}

{\large 
\indent Then there exists some open interval between the dimension function
of $A_2$ and the dimension function of the $B$. This open interval
corresponds to a non-zero ideal $I_1$ inside of $A_2$. Now the image of $I_1$
in the inductive limit is also a non-zero ideal. Since the inductive limit
is simple, it implies that the ideal is the whole algebra. We know that
there are full projections in the inductive limit. Therefore there is a
finite stage in the inductive limit of the ideals coming from $I_1$ that has
a full projection. Assume that the finite stage is inside of $A_k$. This
means that at that stage the image of the ideal $I_1$ is $A_k$. Pick a
strictly positive element $a_1$ in $I_1$. Then the image of $a_1$ in $A_k$
will be strictly positive at each point from $[0,1]$, $k > 1$. This shows
that the image of the dimension function $d_{B}$ inside the dimension of $A_k
$ has a gap of at least $1$ everywhere in $[0,1]$.\newline
\indent Because of the normalizations of the affine function, this gap of
size 1 will correspond to some strictly non-zero $\delta_1$. To complete the
proof we relabel $B$ as $A_{n_1}$, $A_k$ as $A_{n_2}$ etc. }

{\large 
}
\end{proof}

\begin{corollary}
{\large Let $A= \lim\limits_{\rightarrow}(A_n,\phi_{n,m})$ be a simple
C*-algebra. Then there exists a sequence $(\delta_i)_{i\geq 1}$, $\delta_i >0
$, a subsequence of algebras $(A_{n_i})_{i \geq 1}$ of $(A_i)_{i \geq 1}$
and a sequence of maps $\phi: A_{n_i} \rightarrow A_{n_{i+1}}$ such that:%
\newline
\indent 1. $A \cong \lim\limits_{\rightarrow}(A_{n_i},\phi_{n_i,m_i})$,%
\newline
\indent 2. $\phi_{T_{n_i,n_{i+1}}}(\hat{P}_{A_{n_i}}) + \delta_i < \hat{P}%
_{A_{n_{i+1}}}$.\newline
}
\end{corollary}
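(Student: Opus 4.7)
The plan is to prove the corollary by iterating Lemma 8.1 inductively, using a standard ``shrinking tolerance'' argument to ensure that the new inductive system still has limit $A$.

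First, I note that the tail of the inductive system $A \cong \lim_{\rightarrow}(A_n, \phi_{n,m})$ starting at any index $k$ is still an inductive limit description of $A$ by the same class of special continuous trace C*-algebras with spectrum $[0,1]$. Consequently, Lemma 8.1 applies to any such tail, producing a two-stage subsequence $A_{n_k} \to A_{n_{k+1}}$ together with a strictly positive gap $\delta_k > 0$ at the level of the affine function spaces.

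The construction proceeds by induction. Fix an increasing sequence $(F_i)_{i\geq 1}$ of finite subsets of $A$ with dense union, and a summable sequence $\epsilon_i \searrow 0$. For the base step, I apply Lemma 8.1 to the original inductive system with tolerance $\epsilon_1$ on $F_1$, obtaining indices $n_1 < n_2$, a perturbed map $\phi_1 : A_{n_1} \to A_{n_2}$, and a strictly positive $\delta_1$ with $(\phi_1)_T(\hat{P}_{A_{n_1}}) + \delta_1 < \hat{P}_{A_{n_2}}$ pointwise. For the inductive step, assume $A_{n_1}, \ldots, A_{n_i}$ and perturbed maps $\phi_1, \ldots, \phi_{i-1}$ have been chosen with the required gaps $\delta_1, \ldots, \delta_{i-1}$. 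Applying Lemma 8.1 to the tail $\lim_{\rightarrow}(A_m, \phi_{m,m'})_{m \geq n_i}$ with tolerance $\epsilon_i$ on $F_i$ yields an index $n_{i+1} > n_i$, a perturbed map $\phi_i : A_{n_i} \to A_{n_{i+1}}$, and a new strictly positive $\delta_i$ with $(\phi_i)_T(\hat{P}_{A_{n_i}}) + \delta_i < \hat{P}_{A_{n_{i+1}}}$, as required by conclusion 2.

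It remains to verify conclusion 1, namely that $A$ is still isomorphic to $\lim_{\rightarrow}(A_{n_i}, \phi_i)$ after all the perturbations. This is where weak semiprojectivity of the special continuous trace C*-algebras (Remark 4.4) plays its standard role: at each stage the perturbed map $\phi_i$ agrees with the original connecting map (followed by the inclusion into $A_{n_{i+1}}$) within tolerance $\epsilon_i$ on the finite set $F_i$, and since $\sum \epsilon_i < \infty$ and $\bigcup F_i$ is dense in $A$, the two inductive limits are seen to be isomorphic by the usual Elliott approximate intertwining argument applied to the identity map on $\bigcup F_i$. The only delicate point is ensuring that the successive perturbations at stage $i$ do not spoil the gaps already established at stages $1, \ldots, i-1$; since each $\delta_j$ is strictly positive, this is handled by choosing the tolerance $\epsilon_i$ at stage $i$ small compared to $\min(\delta_1, \ldots, \delta_{i-1})$, so that any perturbation of the already-chosen maps preserves strict inequalities. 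This is the main technical obstacle, but it is routine once the parameters are ordered correctly: choose $\epsilon_i$ first small enough to preserve all previous gaps, then carry out the application of Lemma 8.1.

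Finally, relabeling $A_{n_i}$ to run through the new subsequence and $\phi_i$ as the connecting maps gives the statement of the corollary.
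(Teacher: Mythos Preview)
Your proof is correct and follows essentially the same approach as the paper: the paper's proof is the single sentence ``Follows by successively applying the previous lemma,'' and you carry out precisely that iteration with the routine bookkeeping made explicit. One remark: your final worry that the perturbation at stage $i$ might ``spoil the gaps already established at stages $1,\ldots,i-1$'' is unnecessary, since Lemma~8.1 applied to the tail starting at $A_{n_i}$ only modifies maps from stage $i$ onward and leaves the earlier connecting maps---hence the earlier gaps $\delta_1,\ldots,\delta_{i-1}$---untouched.
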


\begin{proof}
{\large Follows by successively applying the previous lemma. }
\end{proof}

\section{\protect\large Pulling back of the isomorphism between inductive
limits at the level of the invariant}

{\large \noindent \indent {\bf Step 1 The intertwining between the stable
part of the invariant}\newline
\indent With no loss of generality we assume that the building blocks have
the following concrete representation 
\begin{equation*}
\left( 
\begin{array}{cccccc}
C_{0}(A_1) & C_{0}(A_1) & C_{0}(A_1) & \dots & C_{0}(A_1) &  \\ 
C_{0}(A_1) & C_{0}(A_2) & C_{0}(A_2) & \dots & C_{0}(A_2) &  \\ 
C_{0}(A_1) & C_{0}(A_2) & C_{0}(A_3) & \dots & C_{0}(A_3) &  \\ 
\vdots & \vdots & \vdots & \ddots &  &  \\ 
C_{0}(A_1) & C_{0}(A_2) & C_{0}(A_3) & \dots & C[0,1] & 
\end{array}
\newline
\right). 
\end{equation*}
\indent One can distinguish a full unital hereditary sub-C*-algebra }

{\large 
\begin{equation*}
\left( 
\begin{array}{cccccc}
0 & 0 & 0 & \dots & 0 &  \\ 
0 & \ddots & 0 & \dots & 0 &  \\ 
0 & \dots & C[0,1] & \dots & C[0,1] &  \\ 
\vdots & \vdots & \vdots & \ddots &  &  \\ 
0 & \dots & C[0,1] & \dots & C[0,1] & 
\end{array}
\newline
\right). 
\end{equation*}
}

{\large \indent The unital hereditary sub-C*-algebra has the same stable
invariant (i.e., $K_0$, AffT$^+$ and the pairing) as the given C*-algebra.
Moreover the unital hereditary sub-C*-algebra is a full matrix algebra over
the closed interval $[0,1]$. Using this fact we derive an intertwining
between the stable invariant, as is shown in \cite{ste0} or originally in %
\cite{ell2}.\newline
\indent It is important to mention the method of normalizing the affine
function spaces. Pick a full projection $p_1 \in A_1$. Normalize the affine
space Aff$T^{+}A_1$ with respect to $p_1$. Next consider a image of $p_1$ in 
$A_2$ under the map at the dimension range level, call it $p_2$. Normalize
Aff$T^{+}A_2$ with respect to $p_2$. Note that the map which is induced at
the affine level is a contraction. Continue in this way so that we obtain an
inductive limit sequence at the level of the affine spaces, with all the
maps being contractions: 
\begin{equation*}
\mathrm{Aff}T^{+}A_1 \rightarrow \mathrm{Aff}T^{+}A_2 \rightarrow \dots
\rightarrow \mathrm{Aff}T^{+}A.
\end{equation*}
\indent Let $p_{\infty}$ denote the image of $p_1$ in the inductive limit $A$
and denote by $q_{\infty}$ a representative of $\phi_0(p_{\infty})$ in $B$.
Then there exists $q_1 \in B_1$ such that the image of $q_1$ is $q_{\infty}$
in the inductive limit. Normalize the Aff$T^{+}B_1$ with respect to $q_1$,
Aff$T^{+}B_2$ with respect to a image of $q_1$ in $B_2$ and so on. Hence we
obtain another inductive limit of affine spaces with contractions maps 
\begin{equation*}
\mathrm{Aff}T^{+}A_1 \rightarrow \mathrm{Aff}T^{+}A_2 \rightarrow \dots
\rightarrow \mathrm{Aff}T^{+}A
\end{equation*}
\begin{equation*}
\mathrm{Aff}T^{+}B_1 \rightarrow \mathrm{Aff}T^{+}B_2 \dots \rightarrow 
\mathrm{Aff}T^{+}B
\end{equation*}
\indent As already mentioned above, we pull back the invariant for the
unital hereditary sub-C*-algebras (i.e. full matrix algebras or the stable
invariant). This will give rise to an exact commuting diagram at the $K_0$%
-level, an approximate commuting diagram at the affine function spaces level
and an exact pairing. The compatibility can be made exact as shown in \cite%
{ell3} by noticing that , because of simplicity, non-zero positive elements
in both $K_0$ and Aff$T^+$ are sent into strictly positive elements and then
normalize the affine function spaces in a suitable way.\newline
\indent To summarize, we now have a commutative diagram 
\begin{equation*}
\begin{array}{ccccccc}
C[0,1] & \overset{\phi_{12}}{\longrightarrow} & C[0,1] & \overset{\phi_{23}}{%
\longrightarrow} & \dots & \longrightarrow & (\mathrm{AffT}^{+}A, \mathrm{%
Aff^{\prime}A}) \\ 
\downarrow \tau_1 & \nearrow \tau^{\prime}_1 & \downarrow \tau_2 & \nearrow
\tau^{\prime}_2 & \nearrow &  & \updownarrow \\ 
C[0,1] & \overset{\psi_{12}}{\longrightarrow} & C[0,1] & \overset{\psi_{23}}{%
\longrightarrow} & \dots &  & (\mathrm{AffT}^{+}B, \mathrm{Aff^{\prime}B})%
\end{array}
\end{equation*}
where Aff$T^+A_i$ and Aff$T^+B_i$ are identified with $C([0,1])$ and each
finite stage algebra $A_i$ and $B_i$ is assumed to have only one direct
summand.\newline
\indent For us it is very important to study the pulling back of the
non-stable part of the invariant.\newline
\indent {\bf Step 2. The intertwining of the non-stable part of the
invariant} \indent As I. Stevens mentioned in \cite{ste0}, at this moment we
know that the non-stable part of the invariant is only approximately mapped
at a later stage into the non-stable part of the invariant. \newline
\indent To be able to apply the Existence Theorem 5.1, one needs to check
that hypothesis 2 can be ensured. Otherwise, a counterexample can be given
to the Existence Theorem, as shown in Section 5.1 above. The special
assumption from the hypothesis of the isomorphism theorem, $\phi_T(\mathrm{%
Aff^{\prime}}A) \subseteq \mathrm{Aff^{\prime}}B$, as well as Corollary 8.2
will be used to prove the above mentioned claim.\newline
\indent By applying Corollary 8.2 to the given inductive limits $A=
\lim\limits_{\rightarrow}(A_n,\phi_{n,m})$, $B=
\lim\limits_{\rightarrow}(B_n,\phi_{n,m})$ we get two sequences $%
(\delta_{i})_{i \geq 1}, \delta_i >0$ and $(\delta^{\prime}_{i})_{i \geq 1},
\delta^{\prime}_i >0$ respectively, and two subsequences of algebras such
that after relabeling, we can assume that $\phi_{ii+1}(\hat{P}_{A_{i}})+
\delta_i < \hat{P}_{A_{i+1}}, \psi_{ii+1}(\hat{P}_{A_{i}})+ \delta_i < \hat{P%
}_{A_{i+1}}, \psi_{ii+1}(\hat{P}_{A_{i}})+ \delta_i < \hat{P}_{A_{i+1}}$ and 
$\psi_{ii+1}(\hat{P}_{A_{i}})+ \delta_i < \hat{P}_{A_{i+1}}$ for all $i \geq
1.$\newline
\indent Reworking the intertwining of the stable invariant for the new
sequences of algebras and the new maps that have gaps $\delta_i$ we obtain
the following intertwining 
\begin{equation*}
\begin{array}{ccccccc}
C([0,1]) & \overset{\phi_{12}}{\longrightarrow} & C([0,1]) & \overset{%
\phi_{23}}{\longrightarrow} & \dots & \longrightarrow & (\mathrm{AffT}^{+}A, 
\mathrm{Aff^{\prime}A}) \\ 
\downarrow \tau_1 & \nearrow \tau^{\prime}_1 & \downarrow \tau_2 & \nearrow
\tau^{\prime}_2 & \nearrow &  & \updownarrow \\ 
C[0,1] & \overset{\psi_{12}}{\longrightarrow} & C[0,1] & \overset{\psi_{23}}{%
\longrightarrow} & \dots &  & (\mathrm{AffT}^{+}B, \mathrm{Aff^{\prime}B})%
\end{array}%
\end{equation*}
\indent As a consequence of the Thomsen-Li theorem, which in the present
case states that the closed convex hull of the set of all unital
*-homomorphisms of $C([0,1])$ in the strong operator topology is exactly the
set of positive of unital operators on $C([0,1])$, we can assume that all
the maps $\phi_{ii+1}, \psi_{ii+1}, \tau_i, \tau^{\prime}_i$ are given by
eigenvalue patterns. Because each such map takes the unit, say $\hat{p}$,
into the unit, $\widehat{K(p)}$, it follows that each map is an average of
the eigenvalues, i.e., $\phi_{i,i+1}(f)= \sum_{i=1}^{N_i}\frac{f \circ
\lambda_i}{N_i}$, etc.\newline
\indent Let $\hat{P}_{A_1}$ be the image in the affine function space of the
unit in the bidual of $A_1$. Take a continuous function $f$ smaller than $%
\hat{P}_{A_1}$. It is important to say that there are no extra conditions on 
$f$, i.e., $f$ can be any element of the special set AffT$^{\prime}A_1$.
Then there exists $\delta_1>0$ such that 
\begin{equation*}
\phi_{12}(\hat{P}_{A_1})+\delta_1 < \hat{P}_{A_2}.
\end{equation*}
\indent Since $\phi_{12}(f) \leq \phi_{12}(\hat{P}_{A_1})$ we have 
\begin{equation*}
\phi_{12}(f + \delta_1) \leq \phi_{12}(\hat{P}_{A_1}+\delta_1) < \hat{P}%
_{A_2}.
\end{equation*}
\indent Since $\phi_T(\mathrm{Aff^{\prime}}A) \subseteq \mathrm{Aff^{\prime}}%
B$, it follows that there exists a large $N$ and $\epsilon_N \leq \delta_1$
such that 
\begin{equation*}
\tau_{N} \circ \phi_{N-2N-1}\circ\dots \phi_{12}(f+\delta_1)< \hat{P}_{B_N}
+ \epsilon_{N}.
\end{equation*}
\newline
\indent It is important to say that a different choice for $f$ will give
rise to possibly different $N$. This is not a difficulty because we can
always pass to subsequence. Equivalently we have 
\begin{equation*}
\tau_{N} \circ \phi_{N-2N-1}\circ\dots \phi_{12}(f)+\delta_1< \hat{P}_{B_N}
+ \epsilon_{N}.
\end{equation*}
\indent Using $\delta_1 \geq \epsilon_N$ we conclude 
\begin{equation*}
\tau_{N}\circ \phi_{N-2N-1}\circ\dots \phi_{12}(f)< \hat{P}_{B_N},
\end{equation*}
which is the desired strict inequality from the hypothesis 2 of the
Existence Theorem 5.1. }

{\large 
%
}

{\large 
}

\section{\protect\large The Isomorphism Theorem}

{\large \noindent \indent To complete the proof of the Isomorphism Theorem
3.1 for the algebras $\lim\limits_{\rightarrow} A_i=A$ and $%
\lim\limits_{\rightarrow}B_i=B$, we have to construct an approximate
commutative diagram at the algebra level in the following sense, as was
defined by Elliott in \cite{ell3},\textit{``for any fixed element in any $A_i
$ (or $B_i$), the difference of the images of this element along two
different paths in the diagram, starting at $A_i$ (or $B_i$) and ending at
the same place, converges to zero as the number of steps for which the two
paths coincide, starting at the beginning, tends to infinity.''}\newline
}

{\large \indent At this stage because of Step 2 of the previous section,
Section 9, we can apply the Existence Theorem to generate a sequence of
algebra homomorphisms $\nu_1, \nu_2,\dots$ and $\nu^{\prime}_1,
\nu^{\prime}_2, \dots$ such that $\frac{||\tau_{i}(f)-\nu_{i*}(f)||}{||f||}
\leq \frac{\epsilon}{2^i}$ and $\frac{||\tau^{\prime}_{i}(f)-\nu^{%
\prime}_{i*}(f)||}{||f||} \leq \frac{\epsilon}{2^i}$ for $f\in F_i$ and $%
g\in G_i$, where $\nu_{i*},\nu^{\prime}_{i*},$ are the induced afine maps by
algebra maps $\nu_i,\nu_i^{\prime}$, and $F_i$ and $G_i$ are finite sets.%
\newline
\indent  After relabeling the indices of the inductive limit systems we now
have a (not necessarily approximately commutative) diagram of algebra
homomorphisms 
\begin{equation*}
\begin{array}{ccccccc}
A_1 & \overset{\phi_{12}}{\longrightarrow} & A_2 & \overset{\phi_{23}}{%
\longrightarrow} & \dots & \longrightarrow & A \\ 
\downarrow \tau_1 & \nearrow \tau^{\prime}_1 & \downarrow \tau_2 & \nearrow
\tau^{\prime}_2 &  &  &  \\ 
B_1 & \overset{\psi_{12}}{\longrightarrow} & B_2 & \overset{\psi_{23}}{%
\longrightarrow} & \dots & \longrightarrow & B%
\end{array}
\end{equation*}
that induces an approximately commutative diagram at the level of the
invariant.\newline
This will be done with respect to given arbitrary finite sets $F_i \subset
A_i$ and $G_i \subset B_i$.\newline
}

{\large 
\indent To make the diagram approximately commuting we modify the diagonal
maps by composing with approximately inner automorphisms and this will be
done with respect to a given arbitrary finite sets $F_i \subset A_i$ and $%
G_i \subset B_i$ with dense union in $A$ and $B$ respectively.\newline
\indent Here we notice that we can apply the Uniqueness Theorem to the data
obtained from the the Existence Theorem because our inequalities are
balanced.\newline
\indent For every $\epsilon>0$ we find an increasing sequence of integers $%
1=M_0 < L_1 < M_2 < L_2 < \dots $ and unitaries $(U_{M_{i+1}}) \in
A_{M_{i+1}}^{+}$, $(V_{i}^{n})_n \in B_{L_i}^{+}$ such that for $f \in
F_{M_{i}}$ and $g \in G_{L_{i}}$ we have 
\begin{equation*}
\frac{||U_{M_{i+1}}\tau^{\prime}_{M_i}(\tau_{M_i}(f))U^*_{M_{i+1}}-%
\phi_{M_iM_{i+1}}(f)||}{||f||}<\frac{\epsilon}{2^i},
\end{equation*}
\begin{equation*}
\frac{||V_{M_{i+1}}\tau_{L_i}(\tau^{\prime}_{L_i}(g))V^*_{L_{i+1}}-%
\phi_{L_iL_{i+1}}(g)||}{||g||}<\frac{\epsilon}{2^i}.
\end{equation*}
\indent In other words passing to suitable subsequences of algebras, it is
possible to perturb each of the homomorphisms obtained in the Existence
Theorem by an approximately inner automorphism, in such a way that the
diagram becomes an approximate intertwining, in the sense of Theorem 2.1, %
\cite{ell3}.\newline
\indent Therefore, by the Elliott approximate intertwining theorem (see \cite%
{ell3}, Theorem 2.1), the algebras $A$ and $B$ are isomorphic. }

{\large 
}

{\large 
}

{\large 
}

{\large 
}

{\large 
}

{\large 
}

\section{\protect\large \ The range of the invariant}

{\large \indent In this section we prove Theorem 3.2 which answers the
question what are the possible values of the invariant from the isomorphism
theorem 3.1. It is useful to notice that the invariant consists of two
parts. One part is the stable part, i.e., $K_0$, AffT$^+$, $\lambda: T^+
\mapsto S(K_0)$ which was shown by K. Thomsen in \cite{kto} to be necessary
if one wants to construct an AI-algebra, and the other part which one may
call the non-stable part, namely Aff$^{\prime}$ or equivalently, as shown in %
\cite{ste0}, Remark 30.1.1 and Remark 30.1.2, the trace norm map. It is the
non-stable part of the invariant that one needs to investigate in its full
generality. Next the definition of the trace norm map is introduced. }

\begin{definition}
{\large Let $\mathcal{A}$ be a sub-C*-algebra of a $C^*$-algebra $\mathcal{B}$. The trace norm
map associated to $\mathcal{A}$ is a function $f:T^+(\mathcal{A}) \rightarrow (0,\infty]$ such
that $f( \tau) = || \tau|_{\mathcal{A}} ||$, $\infty$ if $\tau$ is unbounded. }
\end{definition}

{\large Recall that: }

\begin{definition}
{\large $T^+(\mathcal{A})$ is the cone of positive trace functionals on $\mathcal{A}$ with the
inherited w*-topology. }
\end{definition}

{\large 
}

\begin{remark}
{\large The trace norm map is a lower semicontinuous affine map (being a
supremum of a sequence of continuous functions). }
\end{remark}

\begin{remark}
{\large The dimension range can be determined using the values of the trace
norm map $f$ , the simplex of tracial states $S$ and dimension group $G$. A
formula for the dimension range $D$ is: 
\begin{equation*}
D = \{ x\in G / v(x) < f(v) , v \in S , v \neq 0 \}
\end{equation*}
}
\end{remark}

{\large 
I. Stevens has constructed a hereditary sub-C*-algebra of a simple (unital)
AI-algebra which is obtained as an inductive limit of hereditary
sub-C*-algebras of interval algebras, and has as a trace norm map any given
affine continuous function; cf.\cite{ste0}, Proposition 30.1.7. Moreover she
showed that any lower semicontinuous map can be realized as a trace norm map
in a special case. Our result is a generalization to the case of unbounded
trace norm map when restricted to the base of the cone. It is worth
mentioning that our approach gives another proof in the case of any lower
semicontinuous map as a trace norm map. Still our approach is using the I.
Stevens's proof for the case of continuous trace norm map. \newline
}

{\large \noindent \textbf{Theorem 3.2} \textit{\ Suppose that G is a simple
countable dimension group, V is the cone associated to a metrizable Choquet
simplex. Let $\lambda : V \rightarrow Hom^+(G,R)$ be a continuous affine map
and taking extreme rays into extreme rays. Let $f:V\rightarrow [0,\infty]$
be an affine lower semicontinuous map, zero at zero and only at zero. Then $%
[G, V, \lambda,f]$ is the Elliott invariant of some simple non-unital
inductive limit of continuous trace C*-algebras whose spectrum is the closed
interval $[0,1]$ or a finite disjoint union of closed intervals.} }

\begin{proof}
{\large The proof is based on I. Stevens's proof in a special case and
consists of several steps.\newline
\noindent \textbf{Step 0}\newline
\indent We start by constructing a simple stable AI algebra $\mathcal{A}$ with its
Elliott invariant: $[(G,D),V,\lambda ]$. We know that this is possible (see %
\cite{ste}). By tensoring with the algebra of compact operators we may
assume $\mathcal{A}$ is a simple stable AI algebra.\newline
\noindent \textbf{Step 1}\newline
\indent We restrict the map $f$ to some base $S$ of the cone $T^+(\mathcal{A})$,
where the cone $V$ is naturally identified with $T^+(\mathcal{A})$. Since any lower
semicontinuous affine map $f: S \rightarrow (0, +\infty]$ is a pointwise
limit of an increasing sequence of continuous affine positive maps, (see %
\cite{alf}), we can choose $f = \mathrm{lim} f_n$, where $f_n$ are
continuous affine and strictly positive functions. \newline
\indent Moreover by considering the sequence of functions $g_n = f_{n+1}-f_n$
if $n>1$ and $g_1 = f_1$ we get that: 
\begin{equation*}
\sum_{n=1}^{\infty}g_n = f
\end{equation*}
\textbf{Step 2}\newline
\indent Next we use the results of Stevens (\cite{ste0}, Prop. 30.1.7), to
realize each such continuous affine map $g_n$ as the norm map of a
hereditary sub-C*-algebra $\mathcal{B}_n$ (which is an inductive limit of special
algebra) of the AI algebra $\mathcal{A}$ obtained at Step 0.\newline
\indent Consider the $L^{\infty}$ direct sum $\oplus \mathcal{B}_i$ as a
sub-C*-algebra of $\mathcal{A}$. The trace norm map of the sub-C*-algebra 
$\oplus \mathcal{B}_i$ of $\mathcal{A}$ is equal to $\sum_{i=1}^{\infty}g_n = f$.\newline
\indent To see that $\oplus \mathcal{B}_i$ is a sub-C*-algebra of $\mathcal{A}$ we use that 
$\mathcal{A}$ is a stable C*-algebra:\newline
\begin{equation*}
\oplus \mathcal{B}_i = \left( 
\begin{array}{cccc}
\mathcal{B}_1 &  & 0 &  \\ 
& \mathcal{B}_2 &  &  \\ 
0 &  & \ddots & 
\end{array}
\newline
\right) \subseteq \mathcal{A}\otimes \mathbb{K} \cong \mathcal{A}. 
\end{equation*}
\indent Next denote with $\mathcal{H}$ the hereditary sub-C*-algebra generated by $%
\oplus \mathcal{B}_i$ inside of $\mathcal{A}$.\newline
\indent To prove that the trace norm map of $\mathcal{H}$ is $f$ is enough to show
that the norm of a trace on $\oplus \mathcal{B}_i$ is the same as on $\mathcal{H}$.\newline
\indent It suffices to prove that an approximate unit of the sub-C*-algebra 
$\oplus \mathcal{B}_i$ is still an approximate unit for the hereditary sub-C*-algebra 
$\mathcal{H}$.\newline
\indent We shall prove first that the hereditary sub-C*-algebra generated by 
$\oplus \mathcal{B}_i$ coincides with the hereditary sub-C*-algebra generated by one
of its approximate units. Let $(u_{\lambda})_{\lambda}$ be an approximate
unit of $\oplus \mathcal{B}_i$. Denote by $\mathcal{U}$ the hereditary sub-C*-algebra of 
$\mathcal{H}$ generated by $\{(u_{\lambda})_{\lambda}\}$. We want to prove 
that $\mathcal{U}$ is equal with $\mathcal{H}$. \newline
\indent Since $(u_{\lambda})_{\lambda}$ is a subset of $\oplus \mathcal{B}_i$ we
clearly have 
\begin{equation*}
\mathcal{U} \subseteq \mathcal{H}.
\end{equation*}
\indent For the other inclusion, one can observe that 
\begin{equation*}
\mathrm{for}\;\mathrm{all}\; b \in \oplus \mathcal{B}_i:\;b=\lim\limits_{\lambda
\rightarrow \infty} u_{\lambda}bu_{\lambda}. 
\end{equation*}
\indent Now each $u_{\lambda}bu_{\lambda}$ is an element of the hereditary
sub-C*-algebra generated by $(u_{\lambda})_{\lambda}$ and hence $b \in \mathcal{U}$.
Therefore $\oplus \mathcal{B}_i \subset \mathcal{U}$ which implies 
$\mathcal{H} \subseteq \mathcal{U}.$\newline
\indent We conclude that $\mathcal{H} = \mathcal{U}$ and hence the trace 
norm map of $\mathcal{H}$ is $f$. Therefore $\mathcal{H}$ is a simple 
hereditary sub-C*-algebra of an AI algebra with the prescribed invariant. }
\end{proof}

\begin{remark}
{\large \indent The approximate unit $(u_{\lambda})_{\lambda}$ of $\oplus \mathcal{B}%
_i$ is still an approximate unit for the hereditary sub-C*-algebra $\mathcal{U}$. To
see why this is true let us consider the sub-C*-algebra of $\mathcal{A}$ defined as
follows: $\{ h \in \mathcal{A} \;|\; h= \lim\limits_{\lambda \rightarrow
\infty}u_{\lambda}h \}$.\newline
\indent This sub-C*-algebra of $\mathcal{A}$ is a hereditary sub-C*-algebra. Indeed
let $0 \leq k \leq h $ with $h= \lim\limits_{\lambda \rightarrow
\infty}u_{\lambda}h$. We want to prove that $k = \lim\limits_{\lambda
\rightarrow \infty}u_{\lambda}k$.\newline
\indent Consider the hereditary sub-C*-algebra $\overline{h\mathcal{A}h}$ of $\mathcal{A}$
which clearly contains $h$ (because $h^2 = \lim\limits_{\lambda \rightarrow
\infty }hu_{\lambda}h$). Therefore $k \in \overline{h\mathcal{A}h}$.\newline
\indent Since $h= \lim\limits_{\lambda \rightarrow \infty}u_{\lambda}h$ we
obtain that $u_{\lambda}$ is an approximate unit for $\overline{h\mathcal{A}h}$. In
particular 
\begin{equation*}
k= \lim\limits_{\lambda \rightarrow \infty}u_{\lambda}k
\end{equation*}
and hence $\{ h \in \mathcal{A} \;| h= \lim\limits_{\lambda \rightarrow
\infty}u_{\lambda}h \}$ is a hereditary sub-C*-algebra of $\mathcal{A}$. Since $\mathcal{U}$
is the smallest hereditary containing $(u_{\lambda})_{\lambda}$ we get that 
\begin{equation*}
\mathcal{U} \subseteq \{ h \in \mathcal{A} \;| h= \lim\limits_{\lambda \rightarrow
\infty}u_{\lambda}h \} 
\end{equation*}
and $u_{\lambda}$ is an approximate unit for $\mathcal{U}$. }
\end{remark}

\section{\protect\large Non-AI algebras which are inductive limits of
continuous-trace C*-algebras}

{\large In this section we present a necessary and sufficient condition 
on the invariant for the algebra to be AI. We shall use this in the next section 
to construct an inductive limit of continuous trace
C*-algebras with spectrum $[0,1]$ which is not an AI algebra.\newline
\indent With $[G,V, \lambda,f]$ as before we observe that for an AI algebra
with Elliott invariant canonically isomorphic to the given invariant 
the following equality always holds: 
\begin{equation*}
f(v) = \mathrm{sup} \{v(g): g \in D \},
\end{equation*}
where D is the dimension range. This is seen by simply using the fact that
any AI algebra has an approximate unit consisting of projections.\newline
\indent Therefore a sufficient condition imposed on the invariant in order
to get an inductive limit of continuous trace C*-algebra with spectrum $[0,1]
$ but not an AI algebra is  
\begin{equation*}
f(v) \neq \mathrm{sup} \{v(g): g \in D \}. 
\end{equation*}
\indent This condition is also necessary. Namely assume that we have $f(v) = 
\mathrm{sup} \{v(g): g \in D \}$ and we have constructed a simple C*-algebra 
$\mathcal{A}$ which is an inductive limit of continuous trace C*-algebras with
spectrum $[0,1]$ and with the invariant canonically isomorphic with the
tuple $[G,V, \lambda,f]$. Consider $D = \{x \in G : v(x) < f(v), v \in S, v
\neq 0\}$, where $S$ is a base of the cone $V$. For the tuple $%
[G,D,V,S,\lambda]$ we can build (via the range of the invariant for simple
AI algebras, \cite{ste}) a simple AI-algebra $\mathcal{B}$ with the invariant
naturally isomorphic with the given tuple. \newline
\indent Note that the trace norm map which is defined starting from the
tuple \newline
$[K_{0}(\mathcal{B}),D(\mathcal{B}),T^{+}\mathcal{B}, \lambda_{\mathcal{B}}]$ 
is exactly $f$ because of the equality
\begin{equation*}
f(v) = \mathrm{sup} \{v(g): g \in D \}
\end{equation*}
and $\mathcal{B}$ is an AI algebra.
\newline
\indent It is clear that $\mathcal{B}$ is an inductive limit of continuous trace
C*-algebras with spectrum $[0,1]$ and hence by the isomorphism theorem 2.1
we conclude that $\mathcal{A}$ isomorphic to $\mathcal{B}$. Hence $\mathcal{A}$ 
is a simple AI algebra
as desired and we have proved the following theorem: }

\begin{theorem}
{\large Let $\mathcal{A}$ be a simple C*-algebra which is an inductive limit of
continuous-trace C*-algebras whose spectrum is homeomorphic to $[0,1]$. A
necessary and sufficient condition for $\mathcal{A}$ to be a simple AI algebra is
\begin{equation*}
f(v) = \mathrm{sup} \{v(g): g \in D \}. 
\end{equation*}
}
\end{theorem}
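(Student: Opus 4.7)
The plan is to prove both directions separately, with sufficiency reducing to the isomorphism theorem established earlier.

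For necessity, I would exploit the structural fact that every AI algebra admits an approximate unit consisting of projections. Given any $v\in V\cong T^+\mathcal{A}$, I would choose an approximate unit $(p_\lambda)$ of projections in $\mathcal{A}$; each class $[p_\lambda]$ lies in the dimension range $D$. Since $\|v\|=\sup_\lambda v(p_\lambda)$ by lower semicontinuity and because $(p_\lambda)$ eventually absorbs any positive contraction with finite trace, the value $f(v)=\|v|_{\mathcal{A}}\|$ agrees with $\sup\{v(g):g\in D\}$. The inequality $v(g)\le f(v)$ for $g\in D$ is automatic from the definition of $D$, so the two quantities coincide.

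For sufficiency, suppose $f(v)=\sup\{v(g):g\in D\}$ and let $\mathcal{A}$ be a simple inductive limit of continuous-trace C*-algebras with spectrum $[0,1]$ having invariant $[G,V,\lambda,f]$. Using the recipe $D=\{x\in G : v(x)<f(v) \text{ for all } 0\neq v\in S\}$, I would form the stable tuple $[G,D,V,S,\lambda]$ and invoke Stevens's range-of-invariant theorem for simple AI algebras (cf.\ \cite{ste}) to produce a simple AI algebra $\mathcal{B}$ realizing this tuple. Next I would verify that the trace-norm map canonically associated to $\mathcal{B}$ through its approximate unit of projections is precisely $f$: this is exactly where the assumed equality $f(v)=\sup\{v(g):g\in D\}$ is used, together with the necessity direction applied to the AI algebra $\mathcal{B}$. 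Hence the full invariant $[G,V,\lambda,f]$ of $\mathcal{B}$ matches that of $\mathcal{A}$.

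Finally, since $\mathcal{B}$ is a simple AI algebra it is in particular a simple inductive limit of continuous-trace C*-algebras with spectrum $[0,1]$, so both $\mathcal{A}$ and $\mathcal{B}$ lie in the class to which the Isomorphism Theorem 3.1 applies. Applying that theorem to the tautological isomorphism of invariants yields $\mathcal{A}\cong\mathcal{B}$, so $\mathcal{A}$ is AI.

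The main obstacle I expect is the verification that the trace-norm map associated to the AI algebra $\mathcal{B}$ constructed from the stable data $[G,D,V,S,\lambda]$ really reproduces the prescribed $f$. This is where the hypothesis is essential and where the argument is non-trivial: one must show that taking the supremum of $v$ over the dimension range of an AI algebra whose stable data matches the given tuple yields exactly $f(v)$, uniformly over all $v\in V$. Once this identification is secured, the rest reduces to invoking previously established results.
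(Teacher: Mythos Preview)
Your proposal is correct and follows essentially the same route as the paper: necessity via an approximate unit of projections in an AI algebra, and sufficiency by using Stevens's range theorem to build a simple AI algebra $\mathcal{B}$ from the stable data $[G,D,V,S,\lambda]$, then invoking the hypothesis to identify its trace-norm map with $f$, and finally applying the Isomorphism Theorem 3.1 to conclude $\mathcal{A}\cong\mathcal{B}$. Your identification of the one non-formal step---checking that the trace-norm map of $\mathcal{B}$ reproduces $f$---is exactly where the paper uses the assumed equality together with the fact that $\mathcal{B}$ is AI.
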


\section{\protect\large \ The class of simple inductive limits of continuous
trace C*-algebras with spectrum $[0,1]$ is much larger than the class of
simple AI algebras}

{\large To see this consider the simple AI algebra necessarily not of real
rank zero with scaled dimension group $(\mathbb{Q},\mathbb{Q}_{+})$ and cone
of positive trace functionals a 2-dimensional cone; see \cite{ste}. Then the
set of possible stably AI algebras, or equivalently the set of possible
trace norm maps, may be represented as the extended affine space shown in
the following schematic diagram: }

\begin{center}
\begin{picture}(400,200)(-50,0)
\put(50,10){\dashbox{5}(180,180)}
\put(50,10){\line(1,1){180}}
\linethickness{3pt}
\put(230,10){\line(0,1){180}}
\put(50,190){\line(1,0){180}}
\end{picture}

Figure 6.  
\end{center}


{\large \indent Each off-diagonal point in the diagram is the trace norm map
of one of I. Stevens's algebras. The boundary points of the first quadrant
are removed (dotted lines) and the points with infinite coordinates are
allowed. The dimension range is embedded in a canonical way in the extended
affine space as the main diagonal consisting of the points with rational
coordinates.\newline
\indent The two bold lines represent the cases of inductive limits of
continuous trace C*-algebras with unbounded trace norm map (points on these
two lines have at least one coordinate infinity).\newline
\indent If the point is off the diagonal and in the first quadrant, by
Theorem 12.1 we get that the corresponding C*-algebra is an inductive limit
of continuous trace C*-algebras which is not AI-algebra. It is clear that
the size of the set of points off the diagonal is much larger then the size
of the set of points on the diagonal. (For instance in terms of the Lebesgue
measure.)\newline
\indent This picture shows that the class of simple AI algebras sits inside
the class of inductive limits of continuous trace C*-algebras in the same
way that the main diagonal sits inside the first quadrant. 
}

\end{document}